\DeclareMathOperator\arctanh{arctanh}
\tikzstyle{nodino}=[circle,draw,fill,inner sep=0pt,minimum size=0.5mm]
\tikzstyle{infinito}=[circle,inner sep=0pt,minimum size=0mm]
\tikzstyle{nodo}=[circle,draw,fill,inner sep=0pt, minimum size=0.5*width("k")]
\tikzstyle{nodo_vuoto}=[circle,draw,inner sep=0pt, minimum size=0.5*width("k")]
\tikzset{every loop/.style={min distance=10mm,in=300,out=240,looseness=10}}
\tikzset{place/.style={circle,thick,draw=blue!75,fill=blue!20,minimum
		size=6mm}}
\tikzset{place2/.style={circle,thick,draw=red!75,fill=red!20,minimum
		size=6mm}}
\definecolor{DEblue}{rgb}{0.08,0.24,0.54}
\definecolor{DEgreen}{rgb}{0.07,0.76,0.75}
\definecolor{DEorange}{rgb}{0.92,0.50,0.04}		
\DeclareMathOperator{\sech}{sech}
\newcommand{\NLS}{\rm{NLS}}
\newcommand{\x}{{\bf x}}
\newcommand{\R}{{\mathbb R}}
\newcommand{\Rp}{\R^{+}}
\newcommand{\Rm}{\R^{-}}
\newcommand{\E}{{\mathcal{E}}}
\newcommand{\ep}{\varepsilon}
\newcommand{\dip}{\eta^{dip}}
\newcommand{\Dtau}{\mathcal{D}^\tau}
\newcommand{\Dtaumu}{\mathcal{D}^{\tau}_{\mu}}
\newcommand{\om}{\omega}
\newcommand{\la}{\lambda}
\newcommand{\deb}{\rightharpoonup}
\newcommand{\dx}{\,dx}
\newcommand{\dt}{\,dt}
\newcommand{\ds}{\,ds}
\newcommand{\intRneg}{\int_{-\infty}^{0}}
\newcommand{\intRpos}{\int_{0}^{+\infty}}
\newcommand{\intone}{\int_{-1}^{1}}
\newcommand{\f}[2]{\frac{#1}{#2}}
\theoremstyle{plain} 
\newtheorem{theorem}{Theorem}[section] 
\newtheorem{corollary}[theorem]{Corollary} 
\newtheorem{lemma}[theorem]{Lemma} 
\newtheorem{proposition}[theorem]{Proposition} 
\theoremstyle{definition}
\theoremstyle{definition} 
\newtheorem{definition}{Definition}[section]
\theoremstyle{remark} 
\newtheorem{remark}{Remark}[section]
\title[An explicitly solvable NLS model with discontinuous standing waves]{An explicitly solvable NLS model\\  with discontinuous standing waves}
\author[R. Adami]{Riccardo Adami}
\address{R. Adami: Politecnico di Torino, Dipartimento di Scienze Matematiche ``G.L. Lagrange'', Corso Duca degli Abruzzi, 24, 10129, Torino, Italy.}
\email{riccardo.adami@polito.it}
\author[F. Boni]{Filippo Boni}
\address{F. Boni: Scuola Superiore Meridionale, Largo S. Marcellino, 10, 80138, Napoli, Italy.}
\email{f.boni@ssmeridionale.it}
\author[T. Nakamura]{Takaaki Nakamura}
\address{T. Nakamura: Kochi University of Technology, 185 Tosayamadacho Miyanokuchi, Kami, Kochi 782-8502, Japan}
\email{1160229d@gmail.com}
\author[A. Ruighi]{Alice Ruighi}
\address{A. Ruighi: Istituto di Istruzione Superiore "Cravetta Marconi", Corso Roma, 70, 12038, Savigliano (CN), Italy.}
\email{aliceruighi@gmail.com}
\begin{document}
\begin{abstract}
   {We study the NLS Equation on the line with a point interaction given by the superposition of an attractive delta potential with a dipole interaction, in the cases of $L^2$-subcritical and $L^2$-critical nonlinearity.
   	
   	For a subcritical nonlinearity we prove the existence and the uniqueness of Ground States at any mass. If the mass exceeds an explicit threshold, then there exists a positive excited state too.
   	
   	For the critical nonlinearity we prove that Ground States exist only in a specific interval of masses, while in a different interval excited states exist. We provide the value of the optimal constant in the Gagliardo-Nirenberg estimate and describe in the dipole case the branches of the stationary states as the strength of the interaction varies. 
   	
   	Since all stationary states are explicitly computed, ours is a solvable model involving a non-standard interplay of a nonlinearity with a point interaction.

   }
\end{abstract}

\maketitle

\vspace{-.5cm}
\noindent {\footnotesize \textul{AMS Subject Classification:} 35Q55, 35Q40, 35B33, 35B09, 35R99, 49J40,}

\noindent {\footnotesize \textul{Keywords:} standing waves, nonlinear Schr\"odinger, ground states, delta interaction, critical nonlinearity, excited states, bifurcation, positive solutions, rearrangements.}

       \section{Introduction}
       \label{sec:intro}
       Pointwise perturbations of the Laplacian are nowadays a well established research topic in various branches of PDEs and Mathematical Physics. They are used to model the effect of microscopic impurities 
       on electromagnetic, acoustic or quantum mechanical phenomena.
        In particular, the interaction of 
        impurities with 
        Bose-Einstein condensates (\cite{LSSY,PS}) has often been modeled
        by an additional term in the energy functional, leading to the
        following modified
       Gross-Pitaveskii energy (\cite{G61,P63})
        \begin{equation}
      \label{EGP}  
    E_{\rm{GP}}(\Phi) = \frac{1}{2}\|\nabla\Phi\|_{L^2(\R^3)}^2 + 2 \pi \ell \| \Phi \|_{L^4(\R^3)}^4  + \mathcal{I} (\Phi),
    \end{equation} 
       whose minimizers
      under the constraint 
      \begin{equation} \label{constraint1}
\|\Phi\|_{L^2(\R^3)}^2=N
      \end{equation}
      describe the spatial
      distribution 
      of the condensate.
      
      In \eqref{EGP} the function $\Phi$ is the wave function of the condensate,
 $\ell$ is the scattering length of the 
interaction between the particles of the condensate, and $\mathcal{I} (\phi)$ is the 
contribution to the energy 
due to the pointwise perturbation. In \eqref{constraint1}, $N$ is the number of particles in the condensate.

       Here we restrict to a one-dimensional setting, namely we consider the dynamics of a particle on a line, subject to a pointwise perturbation located at the origin. We analyse a
       specific family of point interactions, characterised by the two following features:
\begin{itemize}
    \item Any wave function $u$ with a finite energy exhibits a jump discontinuity at the 
    origin, i.e.
         $u(0^+)=\tau u(0^-)$;
     \item
    The contribution of the point perturbation
    to the energy is given by
     $ \mathcal I (u) = - \alpha | u (0^-) |^2/2$. 
     \end{itemize}
     The  range of the real parameters 
     $\tau$ and $\alpha$ will be specified later. Notice that they are independent of $u$, so the resulting pointwise perturbation is linear. 

Concerning the nonlinear term in the energy
functional, we
limit to a focusing nonlinearity and generalize the power law in \eqref{EGP} to include the  $L^2$-critical case, that, as widely known, gives rise to a radically different
phenomenology (see e.g. \cite{C-03}).

Summing up, we 
completely and explicitly solve two problems. The first consists in  giving conditions for  the existence and the nonexistence of Ground States at mass $\mu$, i.e. nonnegative minimizers of the energy
\begin{equation}
\label{eq:energy}
E_\alpha(u):=\frac{1}{2} \left( \|u'\|^2_{L^2(\R^-)}+\|u'\|^2_{L^2(\R^+)} \right) -\frac{1}{2\sigma+2} \|u\|_{L^{2\sigma+2}(\R)}^{2\sigma+2} - \frac{\alpha}{2}|u(0^-)|^2,
\end{equation}
among functions belonging to the space
\begin{equation}
\label{eq:Dtau}
\Dtau : =\{u \in H^1(\R^-) \oplus H^1(\R^+) \, : \, u(0^+)=\tau u(0^-)\,\},
\end{equation}
 satisfying the constraint
\begin{equation}
\label{eq:mass-const}
\|u\|_{L^2(\R)}^2=\mu>0,
\end{equation}
where $0<\sigma\leq 2$, $\alpha\geq 0$ and $\tau>1$. Notice that the presence of the discontinuity forces to split the quantum mechanical kinetic energy, typically identified with the squared
$L^2$-norm of the derivative of the wave function, in the contributions of the positive and of the negative real semiaxis.
Notice also that the choice of looking for nonnegative minimizers does not compromise the generality of the problem: indeed, given a complex valued function $u\in \Dtau$ with $\|u\|_{L^2(\R)}^2=\mu$, it is straightforward to verify that $|u|\in \Dtau$ and $E_\alpha(|u|)\leq E_\alpha(u)$. Moreover, the energy functional and the mass constraint are invariant under 
multiplication by a phase, so that if $u$ is a positive Ground State at some value of the mass,
 then for every real $\theta$ the function $e^{i \theta} u$ is a 
 minimizer of \eqref{eq:energy} under the constraint \eqref{eq:mass-const} too.

 The second problem we solve is the
 existence of other positive stationary states of $E_\alpha$, namely critical points of the energy \eqref{eq:energy} under the constraint \eqref{eq:mass-const} (see Lemma \ref{lem:stat-el}). For this problem
 we explicitly give all standing waves for the system associated with the energy functional
 \eqref{eq:energy}.
 
\smallskip

The choice of the range of the parameters deserves some comments.

\begin{enumerate}
    \item 
The restriction  $\tau >1$  does not yield a loss of generality. Indeed,
 $\tau= 1$ corresponds to the standard NLS energy on the real line with a Dirac's delta interaction,  $\tau=0$ gives  the standard NLS on the halfline  (\cite{bc}), while  $0<\tau<1$  is mapped to   $\tau>1$ by the change  $u\mapsto u(-\cdot)$. Moreover, if $\tau<0$, then the transformation 
\begin{equation*}
u\mapsto \begin{cases}
u,\quad &x< 0\\
-u, \quad &x>0    
\end{cases}
\end{equation*}
 reduces to the case $\tau>0$. 


\item 
As anticipated, owing to the choice $\sigma\in (0,2]$ we include  the  subcritical ($0<\sigma<2$) as well as the  critical case $\sigma=2$ (Here and in the following by 
critical and subcritical cases we shall always understand the $L^2$-critical and the
$L^2$-subcritical, respectively). 

\item The negative sign of the nonlinearity means that we are considering a focusing nonlinearity.

\item 
Due to the hypothesis $\alpha\geq 0$, the point interaction  is attractive.
\end{enumerate}


In agreement with the nomenclature introduced in \cite{anv}, we refer to all cases
with $\alpha = 0$
with the name of dipole interaction, while we keep the name of F\"ul\"op-Tsutsui
interaction for the cases with $\alpha > 0$ (\cite{ft}). In fact, to our knowledge the term 
F\"ul\"op-Tsutsui vertex was coined in \cite{cheon} to designate a vertex in quantum graphs where a generalized point interaction is present. 

One-dimensional models with both a power nonlinearity and a point perturbation of the Laplacian are quite 
popular, yet  still spreading: historically, in the
first models of such kind the perturbation is a standard distributional Dirac's delta \cite{AN-09,bc,fj,foo,HMZ}; later, the so-called delta-prime and dipole interaction were investigated 
\cite{an,anv}, and recently more exotic point
perturbation were scrutinized, including nonlinear $\delta$ conditions \cite{bd}. On the other hand, the coupling between a nonlinearity and a point interaction in more than one dimension has been investigated in a restricted number of studies and only in recent years, e.g. in \cite{ABCT-22,ABCT-22bis,CFN, FN-22,FGI-21}.

Furthermore, in the last years the coupling between a nonlinear Schr\"odinger equation and a pointwise interaction found a natural environment in the context of metric graphs \cite{meh}, where the point interaction is located at the vertices and can be expressed by the free or Kirchhoff's conditions (see e.g. \cite{ast1,ast3,BMP,bcjs,cm,cds,cjs,dst,gsd,good,marpel,serraten}) but 
also by interacting linear conditions
 of the $\delta$-type  \cite{ACFN-14}, or even more singular
\cite{abr}.

\smallskip

Let us informally summarize here  the  results in the present paper. 

First, the subcritical picture turns out to be fairly standard for what concerns the Ground States, in that a positive Ground State exists and is unique for every value of the mass. The same happens in the two cases of the dipole and of the attractive delta separately. In fact, a non-standard feature  for the dipole interaction is the occurrence of a second branch of positive stationary states, i.e. excited states, which are  present for every mass, while for the delta interaction they are never present. Furthermore, superposing the two interactions in a F\"ulop-Tsutsui interaction produces an intermediate behaviour in which an excited state exists above a mass threshold $\mu_\alpha$ only (see Thm. \ref{thm:gs-sub}), and such threshold grows with the strength $\alpha$ of the delta interaction. 

In this respect, it is
worth pointing out that, if one restricts to attractive point interactions, then the discontinuity in the energy space is necessary in order to
produce a second branch of excited states. Indeed, as shown in Proposition 2.1 of \cite{AN-09}, the only boundary conditions
that allow for an energy space made of continuous functions are  the delta ones, that
exhibit the  branch of the Ground States  only (see \cite{foo,fj,anv}). Let us recall that
it is however possible to have continuous excited states for attractive point interactions
whose energy space contains discontinuous functions: this is the case for the $\delta'$ 
interactions \cite{an,anv}. Moreover, if one extends the analysis to non-attractive 
point interactions, then only the free, the Dirichlet and the delta boundary conditions provide an energy space of continuous functions. Then, in the repulsive delta case, the only branch of stationary states is
made of excited states \cite{fj}.

The picture drastically changes for the critical case $\sigma = 2.$ More specifically, for 
a dipole interaction all Ground States share the same mass $\mu^\star$, that is the analogue of the critical mass for the standard NLS. Yet, consistently with the dipole nature of the interaction, there is a second family of stationary states, that we call excited states again, all at the same mass $\widetilde \mu > \mu^\star$. In other words, the concentration on a single mass sphere holds
both for the Ground States and for the excited ones (of course, the two resulting spheres are different). Details are given in Thm. \ref{thm:gs-crit-dip}, together with a pictorial
interpretation of the branches of the Ground States and of the excited states and a full
explanation of the mass thresholds at $\tau = \infty$, namely $\frac{\sqrt 3}{2} \pi$ and 
$\frac{3 \sqrt 3}{2} \pi$ (Sec. \ref{sec:mainres}).

Now, considering a F\"ulop-Tsutsui, i.e. turning an attractive delta interaction on, it happens that  both manifolds of the Ground States and of the excited states spread on
some interval of masses: Ground States exist for every mass below $\mu^\star$, while excited
states are present for every mass between $\frac{\sqrt 3}{2} \pi$ and $\widetilde \mu.$ This
is the content of Thm. \ref{thm:gs-crit}.

As well-known, a dual analysis of the stationary states can be made by means of the study of Euler-Lagrange equation associated to the functional $E_\alpha$. This approach provides the notion of the frequency of the associated standing
waves, identified with the nonlinear eigenvalue or Lagrange multiplier, and shows
that in the positive and the negative half-lines every stationary state must be a piece of soliton. In particular, it provides the boundary conditions that every
stationary state must fulfil (see Lemma \ref{lem:stat-el}),
showing that
the quadratic part of the functional \eqref{eq:energy} is associated to a
self-adjoint operator. This correspondence is pointed out in Remark \ref{rem:operator}.
Then, one can explicitly compute the multiplicity of the set of solutions  in terms of the frequency. This is the content of Sec. \ref{sec:stat}.

We highlight again that all results are explicitly given, that makes  the
present model a particularly transparent example of a highly non-trivial interplay between a
point interaction and a nonlinearity. A delta-prime interaction instead of a F\"ulop-Tsutsui's would probably show a rich
phenomenology too, and we plan to complete the analysis in \cite{an,anv}, limited to the subcritical case, with the study of the
critical case in a short delay.

\medskip

The paper is organized as follows:
\begin{itemize}
\item In Section \ref{sec:mainres} we provide the statements of the main theorems and comment
on their meaning and relevance;
    \item Section \ref{sec:aux-res} presents some auxiliary results, in particular Gagliardo-Nireneberg inequalities in $\Dtau$ and a rearrangement procedure for functions in $\Dtau$;
    \item in Section \ref{sec:subcritical} we give the proof of Theorem \ref{thm:gs-sub} on the existence of Ground States in the $L^2$ subcritical case;
    \item Section \ref{sec:critical} is devoted to the proofs of Theorem \ref{thm:gs-crit-dip} and Theorem \ref{thm:gs-crit}, on the $L^2$ critical case for a dipole and a F\"{u}l\"{o}p-Tsutsui interaction respectively;
    in Section \ref{sec:stat} we deduce the Euler-Lagrange equations (Lemma \ref{lem:stat-el}), study the multiplicity of the set of solutions of the NLS arising from the 
    Euler-Lagrange equation (Proposition \ref{prop:stationary}) and finally explicity
    identify and classify all stationary states (Prop. \ref{prop:stat-sub} and \ref{prop:stat-crit}).
\end{itemize}

\section{Main results}
\label{sec:mainres}

Here we state and describe the results proven in Sec. \ref{sec:subcritical} and \ref{sec:critical}, that constitute the core of the paper. As pointed out in Sec. \ref{sec:intro}, Sec. \ref{sec:stat} complements such results by the explicit characterization of all standing waves.

As a first step we study the subcritical case $0<\sigma<2$ in the presence of a F\"{u}l\"{o}p-Tsutsui interaction $\alpha>0$. 

We have the following

\begin{theorem}[Stationary states for $0<\sigma<2$ and $\alpha>0$]
\label{thm:gs-sub}
If $0<\sigma<2$, $\alpha>0$ and $\tau>1$, then
\begin{enumerate}
\item the infimum of $E_\alpha$ is finite and negative for every $\mu>0$ and there exists a unique positive Ground State for $E_\alpha$ at mass $\mu$.
    \item A further positive stationary state for $E_\alpha$ exists if and only if $\mu>\mu_\alpha$, with
    \begin{equation}
        \mu_\alpha = \frac{(\sigma+1)^{\frac{1}{\sigma}}}{\sigma} \left(\frac{\alpha}{\tau^2-1}\right)^{\frac{2-\sigma}{\sigma}}  \int_{-1}^{1} (1-t^2)^{\frac{1}{\sigma}-1}\dt.
    \end{equation}
\end{enumerate}

\end{theorem}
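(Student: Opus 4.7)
To prove item~(1) I would first show that $\inf E_\alpha>-\infty$ on the mass sphere. For this I would combine two ingredients from Section~\ref{sec:aux-res}: the Gagliardo--Nirenberg inequality on $\Dtau$, which in the subcritical regime $0<\sigma<2$ yields
\[
\|u\|_{L^{2\sigma+2}(\R)}^{2\sigma+2}\le C\,\|u'\|_{L^2(\R)}^{\sigma}\,\mu^{(\sigma+2)/2},
\]
and the trace bound $|u(0^-)|^2\le\varepsilon\|u'\|_{L^2(\R^-)}^2+C_\varepsilon\mu$, coming from $H^1(\R^-)\hookrightarrow L^\infty(\R^-)$ and Young's inequality. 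Since $\sigma<2$, the kinetic term dominates both sub-leading contributions, so $E_\alpha$ is coercive on the mass sphere. Strict negativity of the infimum then follows by testing with a profile concentrated around $0^-$, for which the term $-\alpha|u(0^-)|^2/2$ outweighs the kinetic cost.

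Next I would prove the existence of a Ground State via concentration-compactness. A minimizing sequence $(u_n)$ is bounded in $\Dtau$ by the previous step; the rearrangement in $\Dtau$ built in Section~\ref{sec:aux-res} allows me to assume each $u_n$ is nonnegative and suitably concentrated near the origin, without increasing its energy. Passing to weak limits on each half-line and using the compact embedding $H^1((-R,0))\hookrightarrow C([-R,0])$ ensures $u_n(0^\pm)\to u^\star(0^\pm)$ up to a subsequence, so the trace term passes to the limit. Vanishing is excluded by the strict negativity of the infimum, and dichotomy by the strict subadditivity $\elevel(\mu_1+\mu_2)<\elevel(\mu_1)+\elevel(\mu_2)$, which is a standard consequence of the scaling properties of $E_\alpha$ combined with $\elevel(\mu)<0$. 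This produces a nonnegative minimizer $u^\star$ of $E_\alpha$ at mass $\mu$.

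For the uniqueness in~(1) and for item~(2) I would rely on the explicit classification of positive stationary states. By Lemma~\ref{lem:stat-el} any positive critical point satisfies, on each half-line, the soliton equation $-u''+\omega u=u^{2\sigma+1}$ for some frequency $\omega>0$, together with the interface conditions $u(0^+)=\tau u(0^-)$ and a jump on the derivative of strength proportional to $\alpha$. Hence every positive stationary state is a piece of soliton on $\R^-$ and on $\R^+$, and the problem reduces to an algebraic matching in the two shift parameters. Proposition~\ref{prop:stat-sub} will carry out this matching and show that at most two branches occur: a first one defined for every $\omega>0$ (hence for every mass), which must contain the Ground State already constructed, and a second one appearing only above a critical frequency $\omega_\alpha$. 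The stated value of $\mu_\alpha$ is obtained by evaluating the $L^2$-mass of the bifurcating profile at $\omega=\omega_\alpha$: the change of variables $t=\tanh(\sigma\sqrt{2\omega_\alpha}\,x)$ turns the mass integral into $\int_{-1}^{1}(1-t^2)^{1/\sigma-1}\,dt$, while $\omega_\alpha$, determined by the jump condition, is proportional to $[\alpha/(\tau^2-1)]^{2}$, so that $\omega_\alpha^{(2-\sigma)/(2\sigma)}$ produces the factor $[\alpha/(\tau^2-1)]^{(2-\sigma)/\sigma}$.

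The most delicate point is the precompactness of the minimizing sequence: the discontinuity at the origin and the trace term $-\alpha|u(0^-)|^2/2$ prevent a direct application of the standard one-dimensional symmetric rearrangement, and force us to use the $\Dtau$-rearrangement from Section~\ref{sec:aux-res} to recover subadditivity and rule out mass escape; once this is in place, the rest of the proof is essentially an unpacking of Proposition~\ref{prop:stat-sub}.
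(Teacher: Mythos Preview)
Your overall architecture is close to the paper's, but there is a genuine gap in the compactness step. You write that ``vanishing is excluded by the strict negativity of the infimum''. This is not enough. On the line with a defect at the origin, the obstruction to compactness is not Lions-type vanishing but \emph{mass escape along a half-line}: a minimizing sequence that slides off to $+\infty$ while mimicking the soliton $\varphi_\mu$ converges weakly to $0$ yet has energy tending to $E_{\NLS}(\varphi_\mu)<0$. Your inequality $\E_\alpha(\mu)<0$ does not rule this out, and the $\Dtau$-rearrangement of Lemma~\ref{lem:rearr} does not help either: it pins the function down on $\R^-$ (monotone increasing), but on $\R^+$ it only produces a single bump at some $x_M\ge 0$, and nothing prevents $x_M\to+\infty$ along the sequence.

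The paper closes this gap by proving the sharper inequality $\E_\alpha(\mu)<E_{\NLS}(\varphi_\mu)$ in its Step~1, via an explicit trial function built from two half-solitons $\varphi_{2\nu}$ on $\R^-$ and $\varphi_{2(\mu-\nu)}$ on $\R^+$ matched through the jump $u(0^+)=\tau u(0^-)$; a convexity argument on $t\mapsto t^{(\sigma+2)/(2-\sigma)}$ then gives the strict inequality. With this in hand, if the weak limit is $0$ one can splice the discontinuity by a short linear bridge to obtain $w_n\in H^1_\mu(\R)$ with $E_\alpha(w_n)-E_\alpha(u_n)\to 0$, forcing $\E_\alpha(\mu)\ge E_{\NLS}(\varphi_\mu)$, a contradiction. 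Your dichotomy argument via strict subadditivity is essentially equivalent to the paper's scaling/Brezis--Lieb treatment of the case $0<m<\mu$, and your outline for uniqueness and item~(2) via Proposition~\ref{prop:stat-sub} is correct (though note the first branch exists only for $\omega>\alpha^2/(\tau^2+1)^2$, not for all $\omega>0$, and the change of variables is $t=\tanh(\sigma\sqrt{\omega}\,x)$).
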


The proof is carried out through an argument of concentration-compactness,
establishing that
 the existence of Ground States at mass $\mu$ for $E_\alpha$ can be reduced to the problem of finding functions in $\Dtau$ at mass $\mu$ with energy below that
 of the one-dimensional soliton $\varphi_\mu$ 
 with the same mass (details in Sec. 3). This is not  surprising, since
 in several problems of existence of Ground States for the NLS in spatial structures containing
 half-lines (see e.g. \cite{ast1}), the only possible lack of compactness is realized by sequences that escape through
 a half-line mimicking the shape of a soliton. Once established such a point,
 the result in Thm. \ref{thm:gs-sub}
  follows from the presence of the  additional term $-\alpha|u(0^-)|^2$ that lowers the Ground State energy  of a simple dipole interaction, which is already below the energy of $\varphi_\mu$ and has been discussed in \cite{anv}.
Thus, Ground States in the
presence of an attractive F\"{u}l\"{o}p-Tsutsui $\delta$-interaction exist for every value of the mass. 

Point $(2)$ in  Thm. \ref{thm:gs-sub} follows from the analysis of stationary solutions of \eqref{eq:bound-state} carried out in Section \ref{sec:stat} (see Proposition \ref{prop:stat-sub}). Two remarks are in order. First, the presence of two different positive solutions marks an important difference with the standard NLS model in $\R^N$, where the uniqueness of Ground States is proved in \cite{K-89} reducing the problem to a radial setting via the moving planes method \cite{GNN-81}. Second, the appearance of a further positive stationary state is due to the presence of a resonance in the corresponding problem without the nonlinear term. For
more details see Remark \ref{rem:resonance}.

Let us now discuss the  critical case $\sigma=2$. Before stating the main theorems, let us recall that in the  critical case a key role is usually played by Gagliardo-Nirenberg inequality, that in our context (see Lemma \ref{lem:gn-dtau}) reads 
\begin{equation}
\label{eq:gn-crit}
\|u\|_{L^6(\R)}^6\leq K_\tau \left(\|u'\|_{L^2(\Rm)}^2+\|u'\|_{L^2(\Rp)}^2\right)\|u\|_{L^2(\R)}^4\quad\forall\, u\in \Dtau,
\end{equation}
where 
\begin{equation}
K_\tau:=\sup_{u\in\Dtau\setminus\{0\}}\frac{\|u\|_{L^6(\R)}^6}{\left(\|u'\|_{L^2(\Rm)}^2+\|u'\|_{L^2(\Rp)}^2\right)}\|u\|_{L^2(\R)}^4
\end{equation}
is the optimal constant in \eqref{eq:gn-crit}. Plugging \eqref{eq:gn-crit} into the energy $E_0$ and imposing the mass constraint, it follows that
\begin{equation*}
E_0(u)\geq \frac{1}{2}\left(1-\frac{K_\tau}{3}\mu^2\right)\left(\|u'\|_{L^2(\Rm)}^2+\|u'\|_{L^2(\Rp)}^2\right),
\end{equation*}
leading to the definition of the {critical mass}
  \begin{equation}
\label{eq:mu-star}
    \mu^\star:=\sqrt{\frac{3}{K_\tau}}.
\end{equation}  

\noindent
In the critical case too, we start by treating the presence of a pure dipole interaction. 
After
prelimarily introducing the notation
\begin{equation}
\E_\alpha(\mu):=\inf\left\{ E_\alpha(v)\,:\, v\in \Dtau\,,\,\|v\|_{L^2(\R)}^2=\mu\right\},   \label{ealfa}
\qquad \alpha \geq 0,
\end{equation} 
we have the following

\begin{theorem}[Stationary states for $\sigma=2$ and $\alpha=0$]
\label{thm:gs-crit-dip}
If $\sigma=2$, $\tau>1$ and $\mu>0$, then 
\begin{enumerate}
    \item the infimum of the constrained energy $E_0$ is given by \begin{equation}
\label{eq:E0-thm-crit}
    \E_0(\mu)=
    \begin{cases}
        0\quad &\text{if}\quad 0<\mu\leq\mu^\star,\\
        -\infty &\text{if}\quad \mu>\mu^\star.\\
    \end{cases}
\end{equation}
\item Ground states of $E_0$ at mass $\mu$ exist if and only if $\mu=\mu^\star$ and optimize Gagliardo-Nirenberg inequality \eqref{eq:gn-crit}. Moreover, the value of $\mu^\star$ is explicitely known as 
\begin{equation}
\label{eq:mu-star-exact}
\mu^\star=\frac{\sqrt{3}}{2}\left(\frac{\pi}{2}+2 \arcsin\left(\frac{1}{\sqrt{1+\tau^4}}\right)\right).
\end{equation}

\item Further stationary states of $E_0$ exist if and only if $\mu=\widetilde{\mu}$, with 
\begin{equation}
\label{eq:mu-tilde}
\widetilde{\mu}=\frac{\sqrt{3}}{2}\left(\frac{3\pi}{2}-2 \arcsin\left(\frac{1}{\sqrt{1+\tau^4}}\right)\right).
\end{equation}
\end{enumerate}

\end{theorem}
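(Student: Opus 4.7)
The plan is to combine the Gagliardo--Nirenberg inequality \eqref{eq:gn-crit} with the explicit classification of positive standing waves carried out in Section \ref{sec:stat}.

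\emph{Part (1).} Plugging \eqref{eq:gn-crit} into $E_0$ and using the definition \eqref{eq:mu-star} of $\mu^\star$ gives
\begin{equation*}
E_0(u) \geq \frac{1}{2}\left(1 - (\mu/\mu^\star)^2\right)\left(\|u'\|_{L^2(\R^-)}^2 + \|u'\|_{L^2(\R^+)}^2\right)
\end{equation*}
for every $u \in \Dtau$ with mass $\mu$, so $\E_0(\mu) \geq 0$ whenever $\mu \leq \mu^\star$. The critical rescaling $u_\lambda(x) := \sqrt{\lambda}\, u(\lambda x)$ preserves both $\Dtau$ and the $L^2$-norm and acts on the energy as $E_0(u_\lambda) = \lambda^2 E_0(u)$; picking any $u$ of mass $\mu$ with finite energy and letting $\lambda \to 0^+$ yields the matching upper bound $\E_0(\mu) \leq 0$. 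If instead $\mu > \mu^\star$, the very definition of $K_\tau$ produces, for $\varepsilon$ small enough, an admissible $w$ of mass $\mu$ realizing $E_0(w) < 0$, and then $E_0(w_\lambda) = \lambda^2 E_0(w) \to -\infty$ as $\lambda \to \infty$.

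\emph{Part (2).} For $\mu < \mu^\star$ the strict inequality $E_0(u) > 0 = \E_0(\mu)$ for every non-trivial $u$ rules out a minimizer. At $\mu = \mu^\star$, ground states coincide with positive optimizers of \eqref{eq:gn-crit}, and I would appeal to Proposition \ref{prop:stat-crit} from Section \ref{sec:stat}, which, by integrating the Euler--Lagrange equation on each half-line and imposing $u(0^+) = \tau u(0^-)$ together with the dipole derivative matching ($\alpha = 0$), exhibits every positive stationary state as a concatenation of two pieces of the critical soliton. Restricting to the configurations that saturate \eqref{eq:gn-crit} selects precisely the ground-state branch, and a direct integration of the $L^2$-norm along that branch produces \eqref{eq:mu-star-exact}. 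For $\mu > \mu^\star$, non-existence is immediate from $\E_0(\mu) = -\infty$.

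\emph{Part (3).} The same classification shows that, besides the ground-state branch, the soliton-matching problem admits exactly one further family of positive solutions, distinguished by the fact that the two soliton pieces are attached on the opposite sides of their respective humps. In the mass computation this reflection sends $\arcsin\bigl(1/\sqrt{1+\tau^4}\bigr)$ to $\pi - \arcsin\bigl(1/\sqrt{1+\tau^4}\bigr)$, yielding the formula \eqref{eq:mu-tilde}; the scale invariance intrinsic to the critical exponent then forces each of the two families to live on a single mass sphere, which is precisely what delivers the "if and only if" statements in items (2) and (3).

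The main obstacle I anticipate is the existence of a positive optimizer of \eqref{eq:gn-crit}: in a scale-invariant variational problem, compactness is notoriously delicate and a direct concentration-compactness attempt would be obstructed by the critical scaling. The explicit soliton description coming from Section \ref{sec:stat} sidesteps this issue by constructing the optimizer by hand, so the difficulty is absorbed into the piecewise-soliton classification rather than into an abstract compactness argument.
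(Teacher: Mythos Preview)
Your Part~(1) is correct and essentially identical to the paper's Step~1.

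The genuine gap is in Part~(2), precisely at the point you yourself flag. You write that the explicit soliton description ``sidesteps'' the compactness issue ``by constructing the optimizer by hand,'' but it does not. The classification in Section~\ref{sec:stat} (Lemma~\ref{lem:stat-sol-dip} in particular) tells you that any GN optimizer, \emph{if it exists}, must be one of the two explicit states $u_1,u_2$; it does not tell you that the supremum $K_\tau$ is attained. Concretely: from $E_0(u_1)=0$ you get $\mu^\star\le\|u_1\|_2^2$, but nothing in your argument excludes $\mu^\star<\|u_1\|_2^2$. In that scenario $K_\tau$ would simply not be achieved, and no ground state would exist at $\mu^\star$---perfectly consistent with everything you have written. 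The statement ``restricting to the configurations that saturate \eqref{eq:gn-crit} selects precisely the ground-state branch'' is therefore circular: you have not shown that any configuration saturates \eqref{eq:gn-crit}.

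The paper does not avoid the compactness argument; it carries it out (Step~3). The two ingredients that make it work are: (i)~the a~priori bound $\mu^\star\le\|u_1\|_2^2<\tfrac{\sqrt{3}}{2}\pi$ coming from the explicit $u_1$ (Step~2), and (ii)~Lemma~\ref{lem:gn-class-const}, which says that along any sequence $v_n\rightharpoonup 0$ in $H^1(\R^-)\oplus H^1(\R^+)$ the GN ratio is asymptotically bounded by the \emph{classical} constant $4/\pi^2$ on $\R$. Combining (i) and (ii), a GN-maximizing sequence at mass $\mu^\star$ (rearranged via Lemma~\ref{lem:rearr} and normalized so that $\|u_n'\|_2=1$) cannot vanish, because $4(\mu^\star)^2/\pi^2<3=K_\tau(\mu^\star)^2$; dichotomy is then ruled out by Brezis--Lieb together with the same lemma. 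Only after this compactness step does the identification $\mu^\star=\|u_1\|_2^2$ become legitimate. Your proposal needs either this argument or a genuine substitute for it.
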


Let us highlight that, as for the  critical case for the standard NLS, Ground States exist at the critical mass $\mu^*$ only and the quantity $\E_0(\mu)$ is bounded and equal to zero if and only if the mass is smaller than the critical mass. 

The first important novelty in Theorem \ref{thm:gs-crit-dip} is that the value of the critical mass is between the values $\frac{\sqrt{3}\pi}{4}$ and $\frac{\sqrt{3}\pi}{2}$, i.e. the critical masses for the standard NLS energy on $\Rp$ and $\R$ respectively and, as a consequence, the optimal Gagliardo-Nirenberg constant $K_\tau$ is between $\frac{4}{\pi^2}$ and $\frac{16}{\pi^2}$, being the optimal constants (see \cite{Nagy-41}) for the standard Gagliardo-Nirenberg inequality on $\R$ and $\Rp$ respectively. 

Moreover, since solutions of \eqref{eq:bound-state} can be computed explicitly, we are also able to compute the exact value of the quantities $\mu^\star$ and $K_\tau$: notice that $\mu^\star= \frac{\sqrt{3}}{2}\pi$ for $\tau=1$ and $\mu^\star\to \frac{\sqrt{3}}{4}\pi$ as $\tau\to+\infty$, thus the standard problems on the line and on the half-line are limiting cases in our analysis.

Furthermore, a remarkable difference with the standard NLS consists in the presence of another family of positive stationary solutions. Such solutions have all the same mass $\widetilde{\mu}$, that is between $\frac{\sqrt{3}}{2}\pi$, the critical mass on the line, and $\frac{3\sqrt{3}}{4}\pi$, the sum of the critical mass on the line and the critical mass on the half-line.

Let us briefly explain why another branch of solutions exists, starting from the standard NLS on the line, that corresponds to the case $\tau=1$. In such situation, there exists only the family of the Ground States, given by the soliton $\varphi_\mu$ and its translations. When the dipole interaction is switched on, i.e. when $\tau$ is close to $1$, a jump discontinuity of the kind
\begin{equation}
 \begin{cases}
u(0^+)=\tau u(0^-),\\
u'(0^+)=\frac{1}{\tau}u'(0^-),
 \end{cases}
\end{equation}
(\cite{anv}) starts to appear, and such discontinuity can be realized by manipulating two different solitons. In particular, for a proper $\xi>0$ the branch of the Ground States arises from the soliton $\varphi_\mu(\cdot-\xi)$, by suitably translating on the left its restriction to $\R^+$ and on the right its restriction to $\R^-$: as a result, the Ground State has mass smaller than the mass of $\varphi_\mu$ and in the limit as $\tau\to+\infty$ its mass is concentrated only on a half-soliton on $\R^+$ (see Figure \ref{fig:GS}) and equals $\frac{\sqrt 3} 2 \pi$. On the other hand, the other branch of stationary states originates from the soliton $\varphi_\mu(\cdot+\xi)$, by translating on the right its restriction to $\R^+$ and on the left its restriction to $\R^-$: by doing this, such stationary state has mass greater than the mass of $\varphi_\mu$ and in the limit as $\tau\to+\infty$ it recovers a half-soliton on $\R^+$ and a whole soliton escaping at infinity on $\R^-$ (see Figure \ref{fig:stat-states}), approaching for the mass the value  $\frac{3 \sqrt 3} 2 \pi$. 

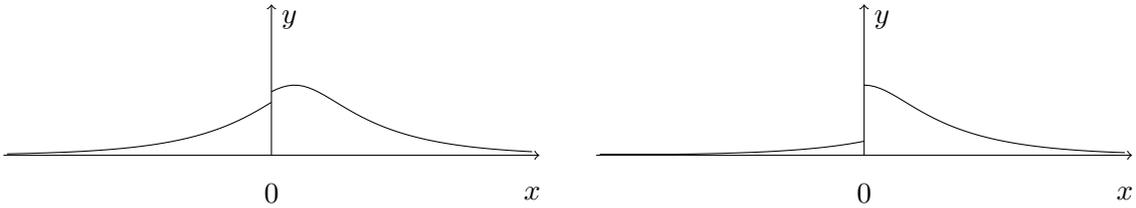
\begin{figure}[h]
\begin{tikzpicture}[xscale= 0.47,yscale=1]
    \draw[step=2,thin,->] (-7.5,0) -- (7.5,0);
    \draw[step=2,thin,->] (0,0) -- (0,2);
\node at (0,-0.5) [] {$0$};
\node at (7.3,-0.5) [] {$x$};
\node at (0.5,1.8) [] {$y$};
\draw[color=black] plot[samples=50,domain=-7.4:0](\x, {0.93 / sqrt(cosh( \x - 1.161 )))});
\draw[color=black] plot[samples=50,domain=0:7.3](\x, {0.93 /  sqrt(cosh(\x - 0.648 ))});
\end{tikzpicture}
\hfill
\begin{tikzpicture}[xscale= 0.47,yscale=1]
    \draw[step=2,thin,->] (-7.5,0) -- (7.5,0);
    \draw[step=2,thin,->] (0,0) -- (0,2);
\node at (0,-0.5) [] {$0$};
\node at (7.3,-0.5) [] {$x$};
\node at (0.5,1.8) [] {$y$};
\draw[color=black] 
plot[samples=50,domain=-7.4:-5.3](\x, {0.93 / sqrt(cosh( -5.3 - 3.9124 )))});
\draw[color=black] 
plot[samples=50,domain=-5.3:0](\x, {0.93 / sqrt(cosh( \x - 3.9124 )))});
\draw[color=black] plot[samples=50,domain=0:7.3](\x, {0.93 /  sqrt(cosh(\x - 0.0016 ))});
\end{tikzpicture}
\caption{Graph of the Ground State for $\tau=1.2$ on the left and $\tau=5$ on the right.}
\label{fig:GS}
\end{figure}

\begin{figure}[h]
\begin{tikzpicture}[xscale= 0.47,yscale=1]
     \draw[step=2,thin,->] (-7.5,0) -- (7.5,0);
    \draw[step=2,thin,->] (0,0) -- (0,2);
\node at (0,-0.5) [] {$0$};
\node at (7.3,-0.5) [] {$x$};
\node at (0.5,1.8) [] {$y$};
\draw[color=black] plot[samples=50,domain=-7.4:0](\x, {0.93 / sqrt(cosh( \x + 1.161 ))});
\draw[color=black] plot[samples=50,domain=0:7.3](\x, {0.93 /  sqrt(cosh(\x + 0.648 ))});
\end{tikzpicture}
\hfill
\begin{tikzpicture}[xscale= 0.47,yscale=1]
    \draw[step=2,thin,->] (-7.5,0) -- (7.5,0);
    \draw[step=2,thin,->] (0,0) -- (0,2);
\node at (0,-0.5) [] {$0$};
\node at (7.3,-0.5) [] {$x$};
\node at (0.5,1.8) [] {$y$};
\draw[color=black] plot[samples=50,domain=-7.4:0](\x, {0.93 / sqrt(cosh( \x + 3.9124 ))});
\draw[color=black] plot[samples=50,domain=0:7.3](\x, {0.93 /  sqrt(cosh(\x + 0.0016 ))});
\end{tikzpicture}
\caption{Graph of the other stationary state for $\tau=1.2$ on the left and $\tau=5$ on the right.}
\label{fig:stat-states}
\end{figure}
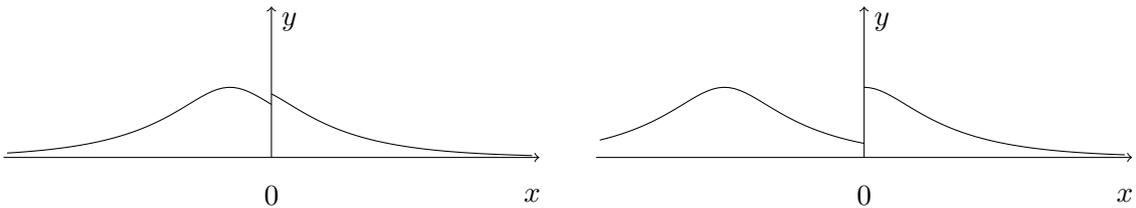

From the technical point of view, the most delicate point in the proof is the existence of Ground States at the critical mass. In the standard NLS case (see for example \cite{W-82}), the authors take advantage of the radial nature of the problem in order to obtain the compactness properties of the minimizing sequences and, as a consequence, the existence of Ground States. In our case, this is not possible since the jump at the origin breaks the radial symmetry of the problem and makes the recovery of compactness more delicate. In order to do it, we use an ad hoc rearrangement procedure (see Lemma \ref{lem:rearr}) combined with an estimate of the Gagliardo-Nirenberg optimal constant for sequences $u_n\deb 0$ weakly converging in $H^1(\Rm)\oplus H^1(\Rp)$ (see Lemma \ref{lem:gn-class-const}).

We state now the Theorem about the $L^2$ critical case in presence of an attractive \emph{F\"{u}l\"{o}p-Tsutsui $\delta$} condition at the origin.

\begin{theorem}[Stationary states for $\sigma=2$ and $\alpha>0$]
\label{thm:gs-crit}
Let $\sigma=2$, $\alpha>0$, $\tau>1$ and $\mu^\star$ and $\widetilde{\mu}$ be as in \eqref{eq:mu-star-exact} and \eqref{eq:mu-tilde}. Then 
\begin{enumerate}
    \item the infimum of the constrained energy is given by
    \begin{equation}
    \E_\alpha(\mu)=
    \begin{cases}
        -c\quad &\text{if}\quad 0<\mu<\mu^\star,\\
        -\infty &\text{if}\quad \mu\geq\mu^\star,
    \end{cases}
\end{equation}
with $c>0$ depending on $\alpha,\tau,\mu$.
\item Ground states of $E_\alpha$ at mass $\mu$ exist if and only if $0<\mu<\mu^\star$.
\item Further stationary states of $E_\alpha$ exist if and only if $\frac{\sqrt{3}}{2}\pi<\mu<\widetilde{\mu}$. 
\end{enumerate} 
\end{theorem}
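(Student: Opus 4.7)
For item (1), suppose first that $0<\mu<\mu^\star$. From \eqref{eq:gn-crit} one has $E_0(u)\geq \frac12(1-K_\tau\mu^2/3)\bigl(\|u'\|_{L^2(\R^-)}^2+\|u'\|_{L^2(\R^+)}^2\bigr)$ with strictly positive coefficient; combining this with the trace bound $|u(0^-)|^2\leq 2\|u\|_{L^2(\R^-)}\|u'\|_{L^2(\R^-)}$ produces an inequality of the form $E_\alpha(u)+C(\alpha,\mu)\geq c(\|u'\|_{L^2(\R^-)}^2+\|u'\|_{L^2(\R^+)}^2)$ with $c>0$, which both proves $\E_\alpha(\mu)>-\infty$ and, later on, bounds every minimising sequence in $H^1(\R^-)\oplus H^1(\R^+)$. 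That $\E_\alpha(\mu)<0$ follows by fixing any $v\in\Dtau$ of mass $\mu$ with $v(0^-)\neq 0$ and using the mass-preserving dilation $v_\lambda(x):=\sqrt{\lambda}\,v(\lambda x)$: a direct computation yields
\begin{equation*}
E_\alpha(v_\lambda)=\lambda^2\,E_0(v)-\frac{\alpha\lambda}{2}|v(0^-)|^2,
\end{equation*}
which is negative for $\lambda$ small enough. For $\mu\geq\mu^\star$, let $u^\star$ be the Gagliardo--Nirenberg optimiser at the critical mass (provided by Theorem \ref{thm:gs-crit-dip}): since $K_\tau(\mu^\star)^2=3$ one has $E_0(u^\star_\lambda)=0$ along the scaling orbit $u^\star_\lambda(x):=\sqrt{\lambda}\,u^\star(\lambda x)$, so $E_\alpha(u^\star_\lambda)=-\alpha\lambda|u^\star(0^-)|^2/2\to-\infty$; a preliminary multiplication by $\sqrt{\mu/\mu^\star}\geq 1$ adjusts the mass and only enlarges the negative $L^6$ contribution, whence $\E_\alpha(\mu)=-\infty$ throughout $\mu\geq\mu^\star$.

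For item (2), let $\{u_n\}\subset\Dtau$ be a minimising sequence at mass $\mu\in(0,\mu^\star)$. By the coercivity just established, $\{u_n\}$ is bounded in $H^1(\R^-)\oplus H^1(\R^+)$, so along a subsequence $u_n\rightharpoonup u$ weakly; the compact embedding on each half-line gives $u_n(0^\pm)\to u(0^\pm)$, and the multiplicative jump condition passes to the limit, giving $u\in\Dtau$. Setting $m:=\|u\|_{L^2(\R)}^2$, Brezis--Lieb in $L^6$ together with weak convergence of $u_n'$ in $L^2$ and the pointwise convergence $u_n(0^-)\to u(0^-)$ yields
\begin{equation*}
E_\alpha(u_n)=E_0(u_n-u)+E_\alpha(u)+o(1),\qquad \|u_n-u\|_{L^2(\R)}^2=\mu-m+o(1),
\end{equation*}
and since $u_n-u\in\Dtau$ eventually has mass below $\mu^\star$, \eqref{eq:E0-thm-crit} gives $E_0(u_n-u)\geq 0$, so $\E_\alpha(\mu)\geq E_\alpha(u)\geq \E_\alpha(m)$. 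The key step is the strict monotonicity $\E_\alpha(\mu)<\E_\alpha(m)$ for $0<m<\mu<\mu^\star$: for any admissible $v$ of mass $m$, the rescaling $\widetilde v:=\sqrt{\mu/m}\,v$ still lies in $\Dtau$ (the jump is multiplicative), has mass $\mu$, and satisfies
\begin{equation*}
E_\alpha(\widetilde v)=\frac{\mu}{m}\,E_\alpha(v)-\frac{1}{6}\left(\left(\frac{\mu}{m}\right)^3-\frac{\mu}{m}\right)\|v\|_{L^6(\R)}^6<\frac{\mu}{m}\,E_\alpha(v).
\end{equation*}
Letting $v$ run through an almost-minimising sequence at mass $m$ and using $\E_\alpha(m)<0$ obtained in item (1), one concludes $\E_\alpha(\mu)\leq(\mu/m)\E_\alpha(m)<\E_\alpha(m)$. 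This forces $m=\mu$; then $\|u_n-u\|_{L^2(\R)}\to 0$, \eqref{eq:gn-crit} gives $\|u_n-u\|_{L^6(\R)}\to 0$, and the identity $\E_\alpha(\mu)=E_\alpha(u)+\lim E_0(u_n-u)$ forces $\|(u_n-u)'\|_{L^2(\R^\pm)}\to 0$, so $u_n\to u$ strongly in $H^1(\R^-)\oplus H^1(\R^+)$ and $u$ is a ground state. Nonnegativity of $u$ follows from $E_\alpha(|u|)\leq E_\alpha(u)$; for $\mu\geq\mu^\star$ no ground state can exist since $\E_\alpha(\mu)=-\infty$.

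For item (3), I rely on the explicit classification carried out in Section \ref{sec:stat}: by Lemma \ref{lem:stat-el}, any positive critical point of $E_\alpha$ on the mass sphere is, on each half-line, a piece of a common critical soliton of frequency $\omega>0$, glued at $0$ by the $\tau$-jump and by the F\"ul\"op--Tsutsui derivative condition. Since every critical soliton has mass $\frac{\sqrt{3}}{2}\pi$ (independent of $\omega$), the matching reduces to an algebraic system for the shift parameters $x_\pm$, which splits into two branches according to the sign of $\tanh(2\sqrt{\omega}\,x_\pm)$: on the ``ground-type'' branch both shifts are positive and the mass varies in $(0,\mu^\star)$, on the ``excited-type'' branch both shifts are negative and the mass varies in $\bigl(\frac{\sqrt{3}}{2}\pi,\widetilde\mu\bigr)$. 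This parametric analysis, carried out in Proposition \ref{prop:stat-crit}, yields the ``if and only if'' in item (3): the ground-type branch recovers the ground states of item (2) and no further solutions, while the excited branch provides exactly one further positive stationary state for each $\mu\in\bigl(\frac{\sqrt{3}}{2}\pi,\widetilde\mu\bigr)$ and none outside this interval.

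The main obstacle is the no-mass-loss step in item (2): the jump at $0$ destroys translation invariance, and at the critical exponent the standard concentration-compactness dichotomy cannot be ruled out by soft arguments, since $\E_\alpha$ is not a priori continuous in $\mu$. The new ingredient is the strict monotonicity $\E_\alpha(\mu)<\E_\alpha(m)$, which genuinely uses the F\"ul\"op--Tsutsui perturbation through the property $\E_\alpha(m)<0$ for every $m>0$. Items (1) and (3) then reduce, respectively, to the explicit scaling constructions above and to the algebraic classification from Section \ref{sec:stat}.
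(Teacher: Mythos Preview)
Your proof is correct and follows essentially the same approach as the paper: scaling constructions for item~(1), concentration--compactness with Brezis--Lieb and a rescaling contradiction for item~(2), and deferral to Proposition~\ref{prop:stat-crit} for item~(3). One minor simplification worth noting: to obtain $E_0(u_n-u)\geq 0$ in the dichotomy step you invoke Theorem~\ref{thm:gs-crit-dip} directly (since $\|u_n-u\|_2^2<\mu<\mu^\star$), whereas the paper uses the sharper Lemma~\ref{lem:gn-class-const}; your shortcut is legitimate here and slightly cleaner. A small descriptive slip in item~(3): the signs you assign to the shifts $x_\pm$ on the two branches are reversed relative to the paper's classification (see the discussion after Proposition~\ref{prop:stationary}, where $x_-^L<0$ and $x_\pm^R>0$), but since you ultimately rely on Proposition~\ref{prop:stat-crit} this does not affect the validity of the argument.
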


As one can see, the presence of the additional attractive $\delta$ interaction does not change the value of the critical mass $\mu^\star$ with respect to Theorem \ref{thm:gs-crit-dip}. On the other hand, below the critical mass the level becomes negative and we gain existence of Ground States, while at the critical mass Ground States fail to exist. A similar behaviour happens for the further family of stationary states. In particular, while in absence of $\delta$ interaction they all have mass equal to $\widetilde{\mu}$ (see Theorem \ref{thm:gs-crit-dip}), when $\alpha>0$ the second family of stationary states covers all the masses between $\frac{\sqrt{3}}{2}\pi$ and $\widetilde{\mu}$, where the endpoints are excluded. Let us highlight that in the interval $\left[\mu^\star,\frac{\sqrt{3}}{2}\pi\right]$ no stationary solutions exist.

\section{Auxiliary results}
\label{sec:aux-res}

In the following we use the notation
\begin{equation}
\label{eq:Dtaumu}
\Dtaumu:=\Dtau\cap\{\|u\|_{L^2(\R)}^2=\mu\},
\end{equation} 
and the 
abbreviations
$\|u'\|^2_2$ and $\|u\|^p_p$ to denote $\|u'\|^2_{L^2(\R^-)}+\|u'\|^2_{L^2(\R^+)}$ and $\|u\|^p_{L^p(\R)}$ respectively. 

This section contains two results that will be useful in the following.

The first one concerns the
Gagliardo-Nirenberg inequalities for function in $\Dtau$. Let us first recall the Gagliardo-Nirenberg inequalities on $\R$. For every $\sigma>0$
\begin{equation}
\label{gnp}
\|u\|_{L^{{2\sigma+2}}(\R)}^{{2\sigma+2}}\leq C\|u'\|_{L^{2}(\R)}^{\sigma}\|u\|_{L^2(\R)}^{\sigma+2}\quad \forall\, u\in H^1(\R),
\end{equation}
and
\begin{equation}
\label{gninf}
\|u\|_{L^\infty(\R)}^{2}\leq \|u'\|_{L^2(\R)}\|u\|_{L^2(\R)}\quad \forall \, u\in H^1(\R).
\end{equation}

Analogous inequalities hold also for functions belonging to $\Dtau$, as the next lemma shows.

\begin{lemma}
\label{lem:gn-dtau}
For every $\sigma\in [0,+\infty]$, there exists $C=C(\sigma)>0$ such that, if $\sigma>0$, then
\begin{equation}
\label{gnp-dtau}
\|u\|_{2\sigma+2}^{2\sigma+2}\leq C \|u'\|_{2}^{\sigma}\|u\|_{2}^{\sigma+2},\quad\forall\, u\in \Dtau,
\end{equation}
and if $\sigma=+\infty$, then
\begin{equation}
\label{gninf-dtau}
\|u\|_{\infty}^{2}\leq C \|u'\|_{2}\|u\|_{2},\quad\forall\,u\in\Dtau.
\end{equation}
\end{lemma}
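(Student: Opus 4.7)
\medskip
\noindent\textbf{Plan of proof.} The jump condition $u(0^+)=\tau u(0^-)$ plays no role here: the inequalities are really statements about functions in $H^1(\R^-)\oplus H^1(\R^+)$, which one gets by summing the Gagliardo-Nirenberg inequalities on the two half-lines. The plan is therefore (i) to record the half-line versions of \eqref{gnp} and \eqref{gninf}, and (ii) to recombine the two pieces into a single estimate involving the full norms $\|u'\|_2$ and $\|u\|_2$.

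First, for any $v\in H^1(\R^+)$, the even reflection $\tilde v(x):=v(|x|)$ belongs to $H^1(\R)$ with $\|\tilde v\|_{L^p(\R)}^p=2\|v\|_{L^p(\R^+)}^p$ and $\|\tilde v'\|_{L^2(\R)}^2=2\|v'\|_{L^2(\R^+)}^2$. Plugging $\tilde v$ into \eqref{gnp} and \eqref{gninf} and paying a constant that depends only on $\sigma$, one obtains the half-line inequalities
\begin{equation}
\|v\|_{L^{2\sigma+2}(\R^\pm)}^{2\sigma+2}\leq C(\sigma)\,\|v'\|_{L^2(\R^\pm)}^{\sigma}\|v\|_{L^2(\R^\pm)}^{\sigma+2},\qquad
\|v\|_{L^\infty(\R^\pm)}^{2}\leq C\,\|v'\|_{L^2(\R^\pm)}\|v\|_{L^2(\R^\pm)}.
\end{equation}

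Next, given $u\in\Dtau$, set $u_\pm:=u|_{\R^\pm}\in H^1(\R^\pm)$ and apply the two half-line inequalities to $u_-$ and $u_+$. Summing and using $\|u_\pm'\|_{L^2(\R^\pm)}\leq \|u'\|_2$ (so $\|u_\pm'\|_{L^2(\R^\pm)}^\sigma\leq \|u'\|_2^\sigma$ for $\sigma>0$), we get
\begin{equation}
\|u\|_{2\sigma+2}^{2\sigma+2}=\|u_-\|_{L^{2\sigma+2}(\R^-)}^{2\sigma+2}+\|u_+\|_{L^{2\sigma+2}(\R^+)}^{2\sigma+2}\leq C(\sigma)\,\|u'\|_2^{\sigma}\bigl(\|u_-\|_{L^2(\R^-)}^{\sigma+2}+\|u_+\|_{L^2(\R^+)}^{\sigma+2}\bigr).
\end{equation}
The elementary inequality $a^p+b^p\leq (a^2+b^2)^{p/2}$ for $a,b\geq 0$ and $p=(\sigma+2)/2\geq 1$ (which is immediate from $(1+t)^p\geq 1+t^p$ for $t\geq 0$) applied with $a=\|u_-\|_{L^2(\R^-)}$, $b=\|u_+\|_{L^2(\R^+)}$ bounds the parenthesis by $\|u\|_2^{\sigma+2}$, yielding \eqref{gnp-dtau}.

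For \eqref{gninf-dtau} it is even more direct: $\|u\|_\infty=\max(\|u_-\|_{L^\infty(\R^-)},\|u_+\|_{L^\infty(\R^+)})$, and on each piece the half-line $L^\infty$ estimate combined with $\|u_\pm'\|_{L^2(\R^\pm)}\leq \|u'\|_2$ and $\|u_\pm\|_{L^2(\R^\pm)}\leq \|u\|_2$ gives the claim. No obstacle is expected; the only mild subtlety is the elementary step $a^p+b^p\leq(a^2+b^2)^{p/2}$ for $p\geq 1$, which must be invoked correctly (and for which the range of $\sigma$ matters only in that $\sigma\geq 0$).
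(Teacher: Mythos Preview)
Your approach is essentially the same as the paper's: both reduce to the standard Gagliardo--Nirenberg inequality on $\R$ via even reflection of the restrictions $u|_{\R^\pm}$, then recombine. The paper bounds $\|u_\pm'\|_2$ and $\|u_\pm\|_2$ separately by constants times $\|u'\|_2$ and $\|u\|_2$, avoiding your final elementary inequality, but this is only a cosmetic difference. One small slip: in the line ``$a^p+b^p\leq(a^2+b^2)^{p/2}$ for $p=(\sigma+2)/2\geq 1$'' you should have $p=\sigma+2\geq 2$ (the inequality fails for $1\leq p<2$, e.g.\ $p=1$, $a=b=1$), which is exactly what you need to bound $\|u_-\|_{L^2(\R^-)}^{\sigma+2}+\|u_+\|_{L^2(\R^+)}^{\sigma+2}$; with this correction the argument is fine.
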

\begin{proof}
 Let $u\in \Dtau$ and, denoted with $u_\pm:\R\to\R$ the functions defined as $u_\pm(x):=u(\pm|x|)$, it follows that $u_\pm\in H^1(\R)$, they are even functions and $u=\chi_- u_- +\chi_+ u_+$. As a consequence,  there results that 
 \begin{equation*}
     \|u_-\|_{2\sigma+2}^{2\sigma+2}+\|u_+\|_{2\sigma+2}^{2\sigma+2}=2\|u\|_{2\sigma+2}^{2\sigma+2},\quad \forall \,\sigma\geq 0,
 \end{equation*}
hence $\|u_\pm\|_{2\sigma+2}^{2\sigma+2}\leq 2\|u\|_{2\sigma+2}^{2\sigma+2}$ for every $\sigma\geq 0$ and $\|u'_\pm\|_2^2\leq 2\|u'\|_2^2$.
Therefore, by using \eqref{gnp} one gets
\begin{equation*}
\begin{split}
\|u\|_{2\sigma+2}^{2\sigma+2} & \leq \frac{1}{2}\left(\|u_-\|_{2\sigma+2}^{2\sigma+2}+\|u_+\|_{2\sigma+2}^{2\sigma+2}\right)\\
&\leq C\left(\|u_-'\|_{2}^{\sigma}\|u_-\|_{2}^{\sigma+2}+\|u_+'\|_{2}^{\sigma}\|u_+\|_{2}^{\sigma+2}\right)\\
& \leq C \|u'\|_{2}^{\sigma}\|u\|_{2}^{\sigma+2}.
\end{split}
\end{equation*}
On the other hand, when $\sigma=+\infty$, one can observe that
\begin{equation*}
\|u\|_\infty^2=\max\{\|u_-\|_{L^\infty(\R)}^2,\|u_+\|_{L^\infty(\R)}^2\}
\end{equation*}
 and proceeds similarly as for the case $\sigma>0$, using \eqref{gninf} instead of \eqref{gnp}.
 \end{proof}

 \begin{remark}
     Lemma \ref{lem:gn-dtau} provides Gagliardo-Nirenberg inequalities in $\Dtau$ without focusing for the moment on the exact values of the optimal constants. As we will see, the exact value of the optimal constant will play a crucial role in the $L^2$ critical case, in particular in the proof of Theorem \ref{thm:gs-crit-dip}. 
 \end{remark}

The second result consists of a rearrangement procedure that will be crucial in the following. 

\begin{lemma}
\label{lem:rearr}
Let $\tau>1$, $\alpha\in \R$, $u\in \Dtaumu$, $u\geq 0$. Then there exists $u^\star\in \Dtaumu$, $u^\star\geq 0$ such that 
\begin{itemize}
    \item [$(i)$] $E_\alpha(u^\star)\leq E_\alpha(u)$,
\item [$(ii)$] $u^\star$ is monotonically increasing on $\R^-$ and there exists $x_M\in [0,+\infty)$ such that $u^\star$ is monotonically increasing on $[0,x_M]$ and monotonically decreasing on $[x_M,+\infty)$.
\end{itemize} 
Moreover, the equality in $(i)$ holds if and only if the cardinality of the set $\{u=t\}$ is equal to $1$ for almost every $t\in[u(0^-),u(0^+)]$ and equal to $2$ for almost every $t\in (0,u(0^-))\cup(u(0^+),\max u]$.
\end{lemma}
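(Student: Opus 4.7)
The plan is to define $u^\star$ by prescribing its super-level sets in terms of the distribution function $F(t):=|\{u>t\}|$ of $u$, preserving both the jump values $u(0^\pm)$ and the global distribution, and then to deduce $\|(u^\star)'\|_2^2\leq\|u'\|_2^2$ by combining the one-dimensional coarea formula with Cauchy--Schwarz applied on each level set. Put $a:=u(0^-)$, $b:=\tau a=u(0^+)$ and $M:=\max u$. For each $t\in[0,M]$ I would take $\{u^\star>t\}$ to be:
\begin{itemize}
\item the symmetric interval $(x_M-F(t)/2,\,x_M+F(t)/2)\subset\R^+$, with $x_M:=F(b)/2$, when $t\in[b,M]$;
\item the interval $(0,F(t))\subset\R^+$ when $t\in[a,b)$;
\item the union $\bigl(-(F(t)-F(a))/2,\,0\bigr)\cup\bigl(0,(F(t)+F(a))/2\bigr)$ when $t\in[0,a)$.
\end{itemize}
These families are nested in $t$ and so define uniquely a function $u^\star:\R\to[0,M]$ enjoying the shape required in $(ii)$, with $u^\star(0^\pm)=u(0^\pm)$ and with $|\{u^\star>t\}|=F(t)$ for every $t\geq 0$. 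By the layer-cake formula this equimeasurability preserves every $L^p$ norm and the boundary term $|u(0^-)|^2$, so that $u^\star\in\Dtaumu$ and only the kinetic term of $E_\alpha$ can vary.

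The heart of the argument lies in proving $\|(u^\star)'\|_2^2\leq\|u'\|_2^2$. By the coarea formula applied on $\R^\pm$ one has $\|u'\|_2^2=\int_0^M\bigl(\sum_{x\in u^{-1}(t)}|u'(x)|\bigr)\,dt$, while $-F'(t)=\sum_{x\in u^{-1}(t)}1/|u'(x)|$ for a.e.\ $t$. Cauchy--Schwarz on the inner sum yields
\begin{equation*}
\sum_{x\in u^{-1}(t)}|u'(x)|\;\geq\;\frac{N(t)^2}{-F'(t)},\qquad N(t):=\#\{u=t\}.
\end{equation*}
The symmetric splitting above forces the slopes of $u^\star$ at the (at most two) preimages of each level to coincide in magnitude, so Cauchy--Schwarz is saturated for $u^\star$ and $\|(u^\star)'\|_2^2=\int_0^M N^\star(t)^2/(-F'(t))\,dt$, with $N^\star=1$ on $(a,b)$ and $N^\star=2$ on $(0,a)\cup(b,M)$. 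Conclusion $(i)$ then reduces to the pointwise bound $N(t)\geq N^\star(t)$ for a.e.\ $t$, which follows from the intermediate-value theorem applied to the continuous representatives of $u|_{\R^\pm}$ using $u(\pm\infty)=0$, $u(0^-)=a$, $u(0^+)=b$: the decreasing branch of $u$ on $\R^+$ must cross every level $t\in(a,b)$, and both branches of $u$ on $\R^-$ and on $\R^+$ must cross every level $t\in(0,a)\cup(b,M)$.

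For the equality case I would trace the two chained inequalities: $E_\alpha(u^\star)=E_\alpha(u)$ forces both $N(t)=N^\star(t)$ a.e.\ and Cauchy--Schwarz equality for $u$ on every level, which together unfold into the cardinality condition stated in the lemma and pin $u$ to the shape of $(ii)$. Conversely, when the cardinality condition is satisfied $u$ already matches the shape of $u^\star$ with the same distribution, so the construction returns $u^\star=u$ and equality is automatic.

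The main obstacle is precisely that no Pólya--Szegő-type argument can be applied separately on the two half-lines: preserving the jump condition $u^\star(0^+)=\tau u^\star(0^-)$ generally requires redistributing mass across the origin, so the estimate must be performed globally via the coarea formula. The explicit symmetric splitting of $F(t)-F(a)$ between $\R^-$ and $\R^+$ for $t<a$ is dictated precisely by the requirement that Cauchy--Schwarz be saturated for $u^\star$ on every level below~$a$, thereby realising the sharp bound.
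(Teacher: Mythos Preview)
Your construction of $u^\star$ via prescribed super-level sets is correct and in fact produces exactly the same function the paper builds: there the domain is split into the three level bands $\{u>b\}$, $\{a<u\le b\}$, $\{0<u\le a\}$, a symmetric, a monotone, and another symmetric rearrangement are applied respectively, and the pieces are glued back together---this is precisely your layer-cake description written out piecewise. The genuine methodological difference lies in how the kinetic inequality is obtained. The paper invokes the classical P\'olya--Szeg\H{o} inequality as a black box on each of the three pieces; you instead rederive the estimate globally from the coarea formula and Cauchy--Schwarz on level sets. Your route is more self-contained and makes the equality characterisation transparent (it drops out of $N(t)=N^\star(t)$ together with the Cauchy--Schwarz equality case), at the cost of having to justify the coarea identities for general $H^1$ functions; the paper's route is modular but then has to appeal to the known equality cases for monotone and symmetric rearrangement.

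Two small corrections. First, your intermediate-value justification of $N(t)\ge 2$ for $t\in(b,M)$ is misstated: it is not true that \emph{both} half-lines must cross such a level (for instance $u|_{\R^-}$ may stay below $b$ entirely); what is true is that the half-line containing a global maximum of $u$ crosses $t$ at least twice, once on each side of that maximum, which is enough. Second, your remark that ``no P\'olya--Szeg\H{o}-type argument can be applied separately'' mischaracterises the obstacle: the paper shows one \emph{can} apply P\'olya--Szeg\H{o} piecewise, provided the decomposition is by level bands rather than by half-lines, and this is exactly what allows the jump at the origin to be preserved.
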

\begin{proof}
    Let us distinguish between the cases $u(0^-)=0$ and $u(0^-)>0$. We start by assuming  $u(0^-)=0$ and consequently
    $u (0^+) = \tau u (0^-) = 0$. Suppose that $\|u\|_{L^\infty(\Rp)}\geq \|u\|_{L^\infty(\Rm)}$ and denote by $x_M>0$ a maximum point for $u$, whose existence is assured by continuity. Then, consider the following construction. Let $\overline{u}\in H^1(0,x_M)$ be the monotone rearrangement of 
    the restriction of $u$ to the interval $(0,x_M)$. By definition of monotone rearrangement and the P\'olya-Szeg\H{o} inequality, we get $\overline{u}(0)=\|u\|_\infty$, $\overline{u}(x_M)=0$ and
    \begin{equation}
    \label{eq:PS-bar}\|\overline{u}'\|_{L^2(0,x_M)}\leq \|u'\|_{L^2(x_M)},\quad \|\overline{u}\|_{L^p(0,x_M)}=\|u\|_{L^p(0,x_M)},\quad p\geq 1.
    \end{equation}
Let  $\widetilde{u}\in H^1(\Rp)$ be the monotone rearrangement of the restriction of $u$ to $\R\setminus(0,x_M)$, so that $\widetilde{u}(0)=\|u\|_\infty$ and 
\begin{equation}
    \label{eq:PS-tilde}\|\widetilde{u}'\|_{L^2(\Rp)}\leq \|u'\|_{L^2(\R\setminus(0,x_M))},\quad \|\widetilde{u}\|_{L^p(\Rp)}=\|u\|_{L^p(\R\setminus(0,x_M))},\quad p\geq 1.
    \end{equation}
    Therefore, the function $u^\star$, defined as
    \begin{equation*}
        u^\star(x):=
        \begin{cases}
            0,\quad &x\in\Rm,\\
            \overline{u}(x_M-x), &x\in (0,x_M)\\
            \widetilde{u}(x-x_M), &x\in [x_M,+\infty),
        \end{cases}
    \end{equation*}
    belongs to $\Dtaumu$ since $u^*(0^+) =
    \tau u^* (0^-) = 0$ and satisfies $(i)$ and $(ii)$. If
    $\|u\|_{L^\infty(\Rp)} < \|u\|_{L^\infty(\Rm)}$, then we consider the function $v (x) = u (-x)$ and repeat for $v$ the construction 
    carried out for $u$. This
    concludes the proof for the case $u(0^-)=0$.
    
    Let us now suppose that $u(0^-)>0$ and define the set $S:=\{x\in \R\setminus\{0\}\,:\,u(x)>u(0^+)\}$. Note that  $u(S)=(u(0^+),\|u\|_\infty]$ is connected and every $t\in u(S)$ is attained at least twice on $\R\setminus\{0\}$, except possibly $\|u\|_\infty$. Therefore, setting $L = |S| / 2$ and denoting by $\widehat{u}\in H^1(-L,L)$ the symmetric rearrangement on the interval $(-L,L)$ of the restriction of $u$ on $S$, one gets
    \begin{equation}
\label{eq:PS-hat}\|\widehat{u}'\|_{L^2(-L,L)}\leq \|u'\|_{L^2(S)},\quad \|\widehat{u}\|_{L^p(-L,L)}=\|u\|_{L^p(S)},\quad \forall\,p\geq 1.  
    \end{equation}
Let us then define the set $I:=\{x\in \R\setminus\{0\}\,:\,u(0^-)<u(x)\leq u(0^+)\}$ and note that $u(I)\subset[u(0^-),u(0^+)]$. Hence, denoting by $u^\dag\in H^1([0,D))$ the monotone rearrangement of the function $u_{|I}$ on the interval $[0,D)$, with $D=|I|$, we have 
\begin{equation}
\label{eq:PS-dag}
\|(u^\dag)'\|_{L^2([0,D))}\leq \|u'\|_{L^2(I)},\quad \|u^\dag\|_{L^p([0,D)])}\leq \|u\|_{L^p(I)},\quad \forall\,p\geq 1.
\end{equation}
Finally, define $M:=\R\setminus(J\cup I\cup\{0\})$ and note that $u(M)\subset[0,u(0^-)]$ is connected and every $t\in u(M)$ is attained at least twice on $\R\setminus\{0\}$. Therefore, denoting by $\mathring{u}\in H^1(\R)$ the symmetric rearrangement on $\R$ of the restriction of  $u$ on $M$, we have 
\begin{equation}
\label{eq:PS-ring}
\|(\mathring{u})'\|_{L^2([0,D))}\leq \|u'\|_{L^2(I)},\quad \|\mathring{u}\|_{L^p([0,D))}\leq \|u\|_{L^p(I)},\quad \forall\,p\geq 1.
\end{equation}
The function $u^\star$ defined as
\begin{equation*}
    u^\star(x):=
    \begin{cases}
    \mathring{u}(x),\quad &x\in \Rm,\\
    \widehat{u}(x-L),\quad &x\in (0,2L),\\
    u^\dag(x-2L),\quad &x\in [2L,2L+D),\\
    \mathring{u}(x-2L-D),\quad &x\in[2L+D,+\infty)
    \end{cases}
\end{equation*}
belongs to $\Dtaumu$ since 
$$
u^* (0^+) = \widehat u (-L) = u (0^+) =
\tau u (0^-) = \tau \mathring u (0^-) =
\tau u^* (0^-)
$$
and satisfies $(i)$ and $(ii)$.

It is left to prove that the equality in $(i)$ is realized if and only if the cardinality of the set $\{u=t\}$ is equal to $1$ for almost every $t\in[u(0^+),u(0^+)]$ and equal to $2$ for almost every $t\in (0,u(0^-))\cup(u(0^+),\max u]$. In order to prove it, it is sufficient to specify in which cases the equality  occurs in each P\'olya-Szeg\H{o} inequality used along the proof, i.e. in \eqref{eq:PS-bar}, \eqref{eq:PS-tilde}, \eqref{eq:PS-hat}, \eqref{eq:PS-dag} and \eqref{eq:PS-ring}. So, for the monotone rearrangements  in \eqref{eq:PS-bar}, \eqref{eq:PS-tilde} and \eqref{eq:PS-dag}, the equality is realized if and only if the number of preimages equals a.e. $1$.  On  the other hand, for the symmetric rearrangements in \eqref{eq:PS-hat} and \eqref{eq:PS-ring},  equality occurs if and only if the number of preimages equals a.e. $2$.
By this considerations, the proof is complete.
\end{proof}

The last result we need is classical 
(\cite{C-03,ZS-72,GSS-87}).

\begin{theorem} \label{thm:nls}
For every $\mu > 0$ there is a unique positive minimizer $\varphi_\mu$ at mass $\mu$
for  the standard NLS energy functional
\begin{equation} \label{enls}
E_{\NLS}(u,\R)=\frac{1}{2}\| u'\|_{L^2(\R)}-\frac{1}{2\sigma+2}\| u \|_{L^{2\sigma+2}(\R)}^{2\sigma+2}.
\end{equation}
 Its explicit expression is
\begin{equation}
   \varphi_\mu(x) = C_\sigma\mu^{\f{1}{2-\sigma}} \sech^{\f{1}{\sigma}}\left(c_\sigma\mu^{\f{\sigma}{2-\sigma}} x\right)\,,
\end{equation}
where $C_\sigma,\,c_\sigma>0$ depends on $\sigma$ only.

All other minimizers of \eqref{enls} coincide with $\varphi_\mu$ up to a constant
phase.
\end{theorem}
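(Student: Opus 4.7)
The plan is to combine a rearrangement-based existence argument with explicit integration of the Euler-Lagrange ODE; I work in the subcritical range $\sigma\in(0,2)$, in which the displayed formula makes sense.

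For existence, given any minimizing sequence $(u_n)$ for $E_{\NLS}(\cdot,\R)$ at mass $\mu$, I would first replace $u_n$ by $|u_n|$ (diamagnetic inequality) and then by its Schwarz symmetrization $u_n^\star$ (Pólya-Szegő): this does not raise the energy, preserves the mass, and produces a nonnegative, even, monotonically decreasing minimizing sequence. The subcritical Gagliardo-Nirenberg inequality \eqref{gnp} yields the coercive bound
\begin{equation*}
E_{\NLS}(u_n,\R)\geq \tfrac{1}{2}\|u_n'\|_{L^2(\R)}^2 - \tfrac{C\mu^{(\sigma+2)/2}}{2\sigma+2}\|u_n'\|_{L^2(\R)}^{\sigma},
\end{equation*}
so $(u_n)$ is bounded in $H^1(\R)$. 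A weak limit $\varphi_\mu$ is automatically symmetric decreasing; the uniform pointwise bound $u_n^\star(x)\leq(\mu/(2|x|))^{1/2}$ yields tightness at infinity, and Lions' concentration-compactness combined with the strict subadditivity $\elevel_{\NLS}(\mu)<\elevel_{\NLS}(\mu_1)+\elevel_{\NLS}(\mu_2)$ for $\mu_1+\mu_2=\mu$ (a consequence of the scaling $u\mapsto\lambda^{1/2}u(\lambda\cdot)$ available in the subcritical regime) rules out dichotomy. This upgrades to strong convergence in $L^2\cap L^{2\sigma+2}$ and produces a nonnegative minimizer $\varphi_\mu$ of mass $\mu$.

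The explicit formula comes from the Euler-Lagrange equation. The Lagrange multiplier rule produces $\omega\in\R$ with $-\varphi_\mu''-\varphi_\mu^{2\sigma+1}+\omega\varphi_\mu=0$ on $\R$, and testing with $\varphi_\mu$ together with Gagliardo-Nirenberg gives $\omega>0$. Multiplying by $\varphi_\mu'$ and integrating, using $\varphi_\mu(\pm\infty)=0$, produces the first integral
\begin{equation*}
(\varphi_\mu')^2=\omega\varphi_\mu^2-\tfrac{1}{\sigma+1}\varphi_\mu^{2\sigma+2}.
\end{equation*}
The ansatz $\varphi(x)=A\sech^{1/\sigma}(Bx)$ solves this separable ODE with $A^{2\sigma}=(\sigma+1)\omega$ and $B=\sigma\sqrt{\omega}$, and imposing $\|\varphi_\mu\|_{L^2(\R)}^2=\mu$ fixes $\omega\propto\mu^{2\sigma/(2-\sigma)}$, which yields the stated formula with constants $C_\sigma,c_\sigma$ depending only on $\sigma$.

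For uniqueness modulo phase, any complex minimizer $u$ satisfies $E_{\NLS}(|u|)\leq E_{\NLS}(u)$ with equality only when $u=e^{i\theta}|u|$ with $\theta$ constant, so attention reduces to real positive minimizers; then Pólya-Szegő gives $E_{\NLS}(u^\star)\leq E_{\NLS}(u)$ with equality only when $u$ is a translate of a symmetric decreasing function, so one may restrict to positive, even, decreasing solutions of the Euler-Lagrange ODE with $\varphi'(0)=0$ and $\varphi\to 0$ at infinity. The phase portrait of the first-order relation above has a unique positive homoclinic orbit through the origin for each $\omega>0$, and the mass constraint selects a unique $\omega$. The main technical obstacle is the compactness step: the exclusion of dichotomy relies on the strict subadditivity of $\elevel_{\NLS}(\cdot)$, which in turn must be established using the scaling invariance of the subcritical functional.
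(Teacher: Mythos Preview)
The paper does not give its own proof of this theorem: it is stated as a classical result and attributed to the references \cite{C-03,ZS-72,GSS-87}. There is therefore nothing to compare against, and your sketch is essentially the standard argument one finds in those references (Schwarz symmetrization plus concentration-compactness for existence, integration of the Euler--Lagrange ODE for the explicit profile, phase-portrait uniqueness for the homoclinic). Two small comments on your outline: the claim that $\omega>0$ follows from ``testing with $\varphi_\mu$ together with Gagliardo--Nirenberg'' is not quite how it goes---the cleanest route is to read $\omega>0$ directly off the first integral at the maximum point, where $\varphi_\mu'(0)=0$ forces $\omega=\varphi_\mu(0)^{2\sigma}/(\sigma+1)$; and note that the statement as written in the paper is slightly imprecise (translates of $\varphi_\mu$ are also positive minimizers), so your uniqueness argument correctly produces uniqueness up to translation and phase, which is the true statement.
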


\section{Proof of Theorem \ref{thm:gs-sub}: the  subcritical case}
\label{sec:subcritical}

This section is devoted to the proof of Theorem \ref{thm:gs-sub}. 

\begin{proof}[Proof of Theorem \ref{thm:gs-sub}]

Let us divide the proof into two steps.\\

\textit{Step 1.}
Here we prove $\E_\alpha(\mu)<E_{\NLS}(\varphi_\mu)$, where $\E_\alpha (\mu)$ was defined in \eqref{ealfa}. In order to prove this inequality, we exhibit a function $v\in \Dtaumu$ such that $E_{\alpha}(v)<E_{\NLS}(\varphi_{\mu})=-\theta_{p}\mu^{\frac{\sigma+2}{2-\sigma}}$. Specifically, we claim that there exists $\nu\in(0\,,\mu)$ such that the function 
\begin{equation*}
v:=\begin{cases}
\varphi_{2\nu} \quad&\text{on}\quad \Rm,\\
\varphi_{2(\mu-\nu)}\quad &\text{on}\quad \Rp,
\end{cases}
\end{equation*}
belongs to $\Dtaumu$ and satisfies $E_{\NLS}(v)<E_{\NLS}(\varphi_{\mu})$. In particular, taking advantage of the definition of $v$ and recalling that $\varphi_{M}(0)=\varphi_1(0)M^{\frac{1}{2-\sigma}}$ for every $M>0$, there results that $\nu$ must satisfy the following system
\begin{equation}
\label{eq:system-nu}
    \begin{cases}
        \nu^{\frac{1}{2-\sigma}}=\tau (\mu-\nu)^{\frac{1}{2-\sigma}},\\
        \nu^{\frac{\sigma+2}{2-\sigma}}+(\mu-\nu)^{\frac{\sigma+2}{2-\sigma}}>2^{-\frac{2\sigma}{2-\sigma}}\mu^{\frac{\sigma+2}{2-\sigma}}.
    \end{cases}
\end{equation}
One immediately sees that the first equation in \eqref{eq:system-nu} has a unique solution given by $\nu=\frac{\tau^{2-\sigma}}{1+\tau^{2-\sigma}}\mu$, thus one has to check only that such solution satisfies also the inequality in \eqref{eq:system-nu}. By straightforward computation, this reduces to the inequality 
\begin{equation*}
    \left(\frac{1+\tau^{2-\sigma}}{2}\right)^\frac{\sigma+2}{2-\sigma}<\frac{1+\tau^\frac{\sigma+2}{2-\sigma}}{2},
\end{equation*}
which is true since the function $(\cdot)^\frac{\sigma+2}{2-\sigma}$ is strictly convex,
so the proof of Step 2 is complete.\\

\textit{Step 2. Existence of Ground States.}  Let $u_{n}$ be a minimizing sequence for $E_\alpha$, i.e. $u_n \in D^\tau, \ \|u_{n}\|_{2}^{2}=\mu$ and $E_\alpha(u_{n})\to \E_\alpha(\mu)$. By applying \eqref{gnp} and \eqref{gninf} to \eqref{eq:energy}, one gets
\begin{equation*}
    E_\alpha(u_n)\geq \frac{1}{2}\|u_n'\|_2^2-C\|u_n'\|_2^\sigma\mu^{\frac{\sigma}{2}+1}-C\|u_n'\|_2\|\mu^\frac{1}{2}.
\end{equation*}
Since $E_\alpha(u_n)$ 
is a bounded sequence
and $\sigma<2$, then $u_n$ is a bounded sequence in $D^\tau$. This entails that $\|u_n\|_{H^1(\Rm)}^2+\|u_n\|_{H^1(\Rp)}^2$ is  bounded, thus, by Banach-Alaoglu's theorem, there exists $u$ such that, up to subsequences, $u_{n}\deb u$ weakly in $H^1(\Rm)\oplus H^1(\Rp)$. Since $u_{n}\to u$ in  $L^\infty_{\text{loc}}(\R\setminus\{0\})$ and thus $u_n(0^\pm)\to u(0^\pm)$, it follows that $u\in\Dtau$. Let us now denote  $m:=\|u\|_2^2$ and observe that $m\leq \mu$ by weak lower semicontinuity of the $L^2$ norm.

Suppose first that $m=0$. In this case, $u\equiv 0$ and so $u_n(0^-)\rightarrow 0$ as $n \to +\infty$. Let us define for every $n\in\mathbb{N}$ the function $w_n:=\frac{\sqrt{\mu}}{\|v_n\|_2}v_n$, where
\begin{equation*}
v_n(x):=
\begin{cases}
    u_n(x+|u_n(0^-)|),\quad &x<-|u_n(0^-)|,\\
    -\frac{u_n(0^-)}{|u_n(0^-)|}x,\quad &-|u_n(0^-)|\leq x\leq 0,\\
    \frac{u_n(0^+)}{|u_n(0^+)|}x,\quad &0<x\leq |u_n(0^+)|,\\
    u_n(x-|u_n(0^+)|),\quad &x>|u_n(0^+)|.\\
    \end{cases}
\end{equation*}
It is easy to check that $w_n\in H^1_\mu(\R)$ and $E_\alpha(w_n)-E_\alpha(u_n)\to 0$ as $n\to+\infty$. Therefore, by Step 1  there results that
\begin{equation*}
E_{\NLS}(\varphi_\mu)>\E_\alpha(\mu)=\lim_{n\to+\infty}E_\alpha(u_n)=\lim_{n\to+\infty} E_\alpha(w_n)\geq E_{\NLS}(\varphi_\mu),
\end{equation*}
which is a contradiction, hence $m>0$. 

Suppose now that $0<m<\mu$. On the one hand, since $\frac{\mu}{\|u_n-u\|_2^2}\to\frac{\mu}{\mu-m}>1$ as $n\to+\infty$ by weak convergence in $L^2(\R)$ and $\sigma>0$, then 
\begin{equation*}
    \E_\alpha(\mu)\leq E_\alpha\left(\sqrt{\frac{\mu}{\|u_n-u\|_2^2}}(u_n-u)\right)<\frac{\mu}{\|u_n-u\|_2^2}E_\alpha(u_n-u),\quad \text{for}\quad n\gg 1,
\end{equation*}
leading to
\begin{equation}
\label{eq:intmas1}
    \liminf_n E_\alpha(u_n-u)\geq \frac{\mu-m}{\mu}\E_\alpha(\mu).
\end{equation}
On the other hand, in a similar way one obtains 
\begin{equation}
\label{eq:intmas2}
E_\alpha(u)>\frac{m}{\mu}\E_\alpha(\mu).    
\end{equation}
Moreover, by exploiting the convergences of $u_n$ to $u$ weakly in $H^1(\Rm)\oplus H^1(\Rm)$ and a.e. on $\R$  and using Brezis-Lieb lemma \cite{bl}, there results
\begin{equation}
\label{eq:intmas3}
    E_\alpha(u_n)=E_\alpha(u_n-u)+E_\alpha(u)+o(1), \quad\text{as}\quad n\to+\infty.
\end{equation}
Combining \eqref{eq:intmas1}, \eqref{eq:intmas2} and \eqref{eq:intmas3}, we get
\begin{equation*}
    \E_\alpha(\mu)=\lim_n E_\alpha(u_n)=\lim_n E_\alpha(u_n-u)+E_\alpha(u)>\E_\alpha(\mu),
\end{equation*}
which is a contradiction, hence $m=\mu$ and $u\in \Dtaumu$. In particular, $u_n\to u$ in $L^{2\sigma+2}(\R)$ since $u_n\to u$ in $L^2(\R)$ and $(u_n)_n$ is bounded in $L^\infty(\R)$, thus by weak lower semicontinuity
\begin{equation*}
E_\alpha(u)\leq\liminf_n E_\alpha(u_n)=\E_\alpha(\mu),
\end{equation*}
namely $u$ is a Ground State of $E_\alpha$ at mass $\mu$. The fact that the Ground State is unique and the existence of a further positive stationary solution for $\mu>\mu_\alpha$  follow by Proposition \ref{prop:stat-sub}.
\end{proof}

\begin{remark}
\label{rem:resonance}

The value $\mu_\alpha$ in the statement of Theorem \ref{thm:gs-sub} corresponds to the mass of a soliton of frequency $\omega_\alpha=\frac{\alpha^2}{(\tau^2-1)^2}$.
Such frequency corresponds to a family of solutions to the system
\begin{equation*}
\begin{cases}
    u''=\omega_\alpha u\quad \text{on}\quad \R\setminus\{0\},\\
    u(0^+)=\tau u(0^-),\\
    u'(0^-)-\tau u'(0^+)=\alpha u(0^-),
    \end{cases}
\end{equation*}
given by the multiples of the function 
\begin{equation*}
    v(x)=\begin{cases}
        \tau e^{-\sqrt{\omega_{\alpha}}x},\quad x\in \Rp,\\
        e^{-\sqrt{\omega_{\alpha}}x},\quad x\in \Rm.
    \end{cases}
\end{equation*}
Such functions 
solve the stationary equation but are not in $L^2(\R)$, so they are resonances.

Let us highlight that $\omega_\alpha$ is larger than the opposite of the first eigenvalue of the linear operator, namely 
\begin{equation*}
	\omega_\alpha>\frac{\alpha^2}{(\tau^2+1)^2}=-\inf\left\{\|u'\|_{L^2(\Rm)}^2+\|u'\|_{L^2(\Rp)}^2-\alpha|u(0^-)|^2\,:\, u\in \Dtau\,,\,\|u\|_{L^2(\R)}^2=1\right\}. 
\end{equation*}
In order to check this fact, it is sufficient to look for solutions in $\Dtau$ of the system
\begin{equation}
	\label{eq:lin-sys}
\begin{cases}
	u''=\omega u\quad \text{on}\quad \R\setminus\{0\},\\
	u(0^+)=\tau u(0^-),\\
	u'(0^-)-\tau u'(0^+)=\alpha u(0^-),
\end{cases}
\end{equation}
that must be of the general form
\begin{equation}
	\label{eq:eigenfunct}
	u(x)=\begin{cases}
		c\tau e^{-\sqrt{\omega}x}, \quad &x>0\\
		c e^{\sqrt{\omega}x}, &x<0.
	\end{cases}
\end{equation} 
By direct computation, the only $\omega>0$ for which \eqref{eq:lin-sys} admits a solution of the form \eqref{eq:eigenfunct} is $\omega=\frac{\alpha^2}{(\tau^2+1)^2}$. 

\end{remark}

\section{Proof of Theorems \ref{thm:gs-crit-dip} and \ref{thm:gs-crit}: the  critical case}

\label{sec:critical}

This section contains the proof of Theorems \ref{thm:gs-crit-dip} and \ref{thm:gs-crit}, that deal with the $L^2$-critical case first in presence of a dipole interaction and then  of a  F\"{u}l\"{o}p-Tsutsui interaction.

Let us first focus on the model with dipole interaction (i.e. $\alpha=0$ in \eqref{eq:energy}). We preliminary observe that 
\begin{equation}
\label{eq:E0leq0}
\E_0(\mu)\leq 0\quad \forall\,\mu>0
\end{equation}
and 
\begin{equation}
\label{eq:E0-inf}
    \E_0(\mu)=-\infty\quad\text{if and only if}\quad \exists\, u\in \Dtaumu\,:\,E_0(u)<0.
\end{equation}
Relations \eqref{eq:E0leq0} and \eqref{eq:E0-inf} follows straightforwardly since, given $u\in \Dtaumu$ and denoted by $u_\la(x):=\sqrt{\la}u(\la x)$, there results that $u_\la\in \Dtaumu$ and
\begin{equation*}
E_0(u_\la)=\la^2 E_0(u),
\end{equation*}
so that $E_0(u_\la)\to 0$ as $\la\to 0^+$ if $E_0(u)>0$ and $E_0(u_\la)\to-\infty$ as $\la \to +\infty$ if $E_0(u)<0$.

Before proving Theorem \ref{thm:gs-crit-dip}, we need to prove two lemmas. The former is about the optimal constant in Gagliardo-Nirenberg inequality \eqref{eq:gn-crit} for sequences $u_n$ weakly convergent to $0$ in $H^1(\Rm)\oplus H^1(\Rp)$.

\begin{lemma}
\label{lem:gn-class-const}
Let $v_n\in \Dtau$ be a sequence such that $v_n\deb 0$ in $H^1(\Rm)\oplus H^1(\Rp)$. Then 
\begin{equation}
    \|v_n\|_6^6\leq \frac{4}{\pi^2}\|v_n\|_2^4\|v_n'\|_2^2+o(1), \quad{\rm{as}}\quad n \to+\infty.
\end{equation}
\end{lemma}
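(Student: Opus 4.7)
The plan is, for each $n$, to build a continuous modification $\tilde v_n\in H^1(\R)$ that agrees with $v_n$ away from the origin and differs from it by a quantity driven by the size of the jump at $0$, apply the sharp $1$-D Gagliardo--Nirenberg inequality on the whole line (whose optimal constant is precisely $4/\pi^2$, attained by the ground state soliton $Q$ for $-Q''+Q=Q^5$, with $\|Q\|_{L^2(\R)}^2=\sqrt 3\,\pi/2$), and then let $n\to\infty$. This is feasible because $v_n\deb 0$ in $H^1(\Rm)\oplus H^1(\Rp)$ implies boundedness of the sequence in this space (Banach--Steinhaus), and via the compact embedding of $H^1$ on each half-line into $C_b$ locally at the origin, $v_n(0^\pm)\to 0$. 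In particular the jump $J_n:=v_n(0^+)-v_n(0^-)=(\tau-1)v_n(0^-)$ tends to $0$.

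Fix once and for all $\phi\in C_c^\infty(\R)$ with $\phi(0)=1$ and set
\begin{equation*}
\tilde v_n(x):=v_n(x)-J_n\phi(x)\chi_{\Rp}(x).
\end{equation*}
The subtracted term has a jump of exactly $J_n$ at the origin, so $\tilde v_n$ is continuous there and belongs to $H^1(\R)$. Its weak derivative coincides with $v_n'$ on $\Rm$ and equals $v_n'-J_n\phi'$ on $\Rp$, so the triangle inequality combined with $J_n\to 0$ and the uniform bound on $\|v_n'\|_2$ gives
\begin{equation*}
\|\tilde v_n'\|_{L^2(\R)}^2=\|v_n'\|_2^2+o(1).
\end{equation*}
Analogously, $\|\tilde v_n-v_n\|_{L^p(\R)}\le|J_n|\,\|\phi\|_{L^p(\Rp)}=o(1)$ for $p\in\{2,6\}$; since the sequences $(\|v_n\|_2)_n$ and $(\|v_n\|_6)_n$ are uniformly bounded (the latter by Lemma \ref{lem:gn-dtau}), this upgrades to $\|\tilde v_n\|_{L^p(\R)}^p=\|v_n\|_p^p+o(1)$ for the same exponents.

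At this point the classical sharp Gagliardo--Nirenberg inequality on $\R$ applied to $\tilde v_n\in H^1(\R)$ yields
\begin{equation*}
\|\tilde v_n\|_{L^6(\R)}^6\le\frac{4}{\pi^2}\,\|\tilde v_n'\|_{L^2(\R)}^2\,\|\tilde v_n\|_{L^2(\R)}^4,
\end{equation*}
and inserting the three asymptotic identities above, together with the uniform boundedness of $\|v_n\|_2$ and $\|v_n'\|_2$, produces
\begin{equation*}
\|v_n\|_6^6\le\frac{4}{\pi^2}\,\|v_n'\|_2^2\,\|v_n\|_2^4+o(1),
\end{equation*}
as claimed. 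The only delicate point is ensuring that the three error terms are genuinely $o(1)$ and not merely bounded; this reduces entirely to the trace convergence $v_n(0^\pm)\to 0$, which is precisely what weak convergence in $H^1$ on each half-line supplies through compactness of the trace at the origin.
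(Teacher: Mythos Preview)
Your proof is correct and follows the same strategy as the paper's: modify $v_n$ into an $H^1(\R)$ function at cost $o(1)$ (since the jump $v_n(0^+)-v_n(0^-)\to 0$ by weak convergence and the trace at the origin), then invoke the sharp Gagliardo--Nirenberg inequality on $\R$ with constant $4/\pi^2$. The paper's construction differs only technically: instead of subtracting a compactly supported bump $J_n\phi\chi_{\Rp}$, it truncates $v_n$ at level $|v_n(0^+)|$, which yields the one-sided inequalities $\|w_n'\|_2\le\|v_n'\|_2$ and $\|w_n\|_2\le\|v_n\|_2$ directly rather than asymptotic equalities.
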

\begin{proof}
Let $v_n$ be as in the statement of the lemma and define for every $n$ the function $w_n$ as
\begin{equation*}
w_n(x):=\begin{cases}
    v_n(x)-|v_n(0^+)|\quad &\text{if}\quad v_n(x)>|v_n(0^+)|,\\
    v_n(x)+|v_n(0^+)|\quad &\text{if}\quad v_n(x)<-|v_n(0^+)|,\\
    0\quad &\text{elsewhere}.
\end{cases}.
\end{equation*}
Since $v_n\deb 0$ in $H^1(\Rm)\oplus H^1(\Rp)$, we have $|v_n(0^+)|\to 0$. Moreover, since $v_n$ is a bounded sequence in $L^2(\R)$, there results 
\begin{equation}
\label{eq:conv-w-v}
\|v_n-w_n\|_6^6\leq \|v_n-w_n\|_{L^\infty(\R)}^4\|v_n-w_n\|_{L^2(\R)}^2\leq 
C |v_n(0^+)|^4 \|v_n\|_2^2\to 0, \quad n\to+\infty.
\end{equation}
Now, since $w_n\in H^1(\R)$, it must satisfy the Gagliardo-Nirenberg inequality
\begin{equation}
\label{eq:gn-class}
\|w_n\|_{L^6(\R)}^6\leq \frac{4}{\pi^2}\|w_n\|_{L^2(\R)}^4\|w_n'\|_{L^2(\R)}^2.
\end{equation}
Since $\|w_n'\|_{L^2(\R)}^2\leq\|v_n'\|_{L^2(\Rm)}^2+\|v_n'\|_{L^2(\Rp)}^2$ and $\|w_n\|_{L^2(\R)}^2\leq\|v_n\|_{L^2(\R)}^2$, by applying \eqref{eq:conv-w-v} to \eqref{eq:gn-class} we get the thesis.
\end{proof}

Let us observe now that Ground States at fixed mass $\mu>0$ solve for some $\omega>0$ the system
\begin{equation}
\label{eq:stat-sol-dip}
\begin{cases}
    u''+|u|^{4}u=\om u\quad \text{on}\quad \R\setminus\{0\},\\
    u(0^+)=\tau u(0^-),\\
    u'(0^-)=\tau u'(0^+).
\end{cases}
    \end{equation}
    
In \cite[Propositions 8.8 and 8.9]{anv}, the authors classified all the real solutions to \eqref{eq:stat-sol-dip}. In the next lemma, we recall the explicit expression of the positive solutions of \eqref{eq:stat-sol-dip}, providing some additional informations. 

\begin{lemma}
\label{lem:stat-sol-dip}
Let $\sigma=2$, $\tau>1$ and $\omega>0$. Then there are exactly two solutions $u_1$ and $u_2$ to \eqref{eq:stat-sol-dip} satisfying the additional condition $u(0^+)>0$. In particular, $u_1$ and $u_2$ are given by
\begin{equation*}
 u_1(x)=\chi_1^\tau(x):=\begin{cases}
     (3\omega)^\frac{1}{4}\mathrm{sech}^\frac{1}{2}(2\sqrt{\om}(x-\xi^+)),\quad x\in \Rp,\\
     (3\omega)^\frac{1}{4}\mathrm{sech}^\frac{1}{2}(2\sqrt{\om}(x-\xi^-)),\quad x\in \Rm,
 \end{cases}   
\end{equation*}
and
\begin{equation*}
 u_2(x)=\chi_2^\tau(x):=\begin{cases}
     (3\omega)^\frac{1}{4}\mathrm{sech}^\frac{1}{2}(2\sqrt{\om}(x+\xi^+)),\quad x\in \Rp,\\
     (3\omega)^\frac{1}{4}\mathrm{sech}^\frac{1}{2}(2\sqrt{\om}(x+\xi^-)),\quad x\in \Rm,
 \end{cases}   
\end{equation*}
where 
\begin{equation*}
    \tanh(2\sqrt{\om}\xi^+)=\frac{1}{\sqrt{1+\tau^4}},\quad \tanh(2\sqrt{\om}\xi^-)=\frac{\tau^2}{\sqrt{1+\tau^4}}
\end{equation*}
Moreover, it holds that 
\begin{equation*}
E_0(u_1)=E_0(u_2)=0
\end{equation*}
and
\begin{equation}
\label{eq:mass-u1-u2}
    \begin{split}
    \|u_1\|_2^2& =\frac{\sqrt{3}}{2}\left(\frac{\pi}{2}+2 \arcsin\left(\frac{1}{\sqrt{1+\tau^4}}\right)\right),\\
    \|u_2\|_2^2& =\frac{\sqrt{3}}{2}\left(\frac{3\pi}{2}-2 \arcsin\left(\frac{1}{\sqrt{1+\tau^4}}\right)\right).
    \end{split}
\end{equation}
\end{lemma}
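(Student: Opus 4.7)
The plan is to work from the ODE on each half-line to the matching at the origin, and then to compute the energy and mass by exploiting the explicit soliton structure. On each half-line $\Rpm$, a positive solution of $u'' + u^5 = \om u$ lying in $L^2$ must vanish at infinity; multiplying by $u'$ and integrating kills the constant of integration, producing the first integral $(u')^2 = \om u^2 - u^6/3 = u^2(\om - u^4/3)$. Since $u>0$ and a vanishing of $u$ would force $u'=0$ at the same point and hence $u\equiv 0$, the solution stays strictly positive; separation of variables then forces
\begin{equation*}
u(x) = (3\om)^{1/4}\sech^{1/2}\bigl(2\sqrt{\om}(x - \xi^\pm)\bigr)\quad\text{on}\quad \Rpm,
\end{equation*}
with the location of the peak $\xi^\pm\in\R$ as single free parameter. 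Positive $L^2$ solutions therefore form a two-parameter family indexed by $(\xi^+,\xi^-)$, consistent with the classification recalled from \cite{anv}.

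I would then impose the dipole conditions $u(0^+) = \tau u(0^-)$ and $u'(0^-) = \tau u'(0^+)$. Writing $X = 2\sqrt{\om}\xi^+$ and $Y = 2\sqrt{\om}\xi^-$ and using that $\sech$ is even while $\tanh$ is odd, a direct computation of $u(0^\pm)$ and $u'(0^\pm)$ turns the conditions into
\begin{equation*}
\sech(X) = \tau^2\sech(Y),\qquad \tanh(Y) = \tau^2\tanh(X).
\end{equation*}
Squaring, substituting $\tanh^2 = 1 - \sech^2$ and eliminating yields $(\tau^8 - 1)\sech^2(Y) = \tau^4 - 1$, so $\sech^2(Y) = 1/(1+\tau^4)$ and hence $\tanh(Y) = \pm\tau^2/\sqrt{1+\tau^4}$, with the sign of $\tanh(X) = \tanh(Y)/\tau^2$ forced to agree. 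This produces exactly two solutions with $u(0^+) > 0$: the case $\xi^+,\xi^- > 0$ giving $u_1 = \chi_1^\tau$, and the case $\xi^+,\xi^- < 0$ giving $u_2 = \chi_2^\tau$.

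To establish $E_0(u_j) = 0$ I would combine two integral identities, obtained separately on $\Rp$ and $\Rm$. Multiplication of the equation by $xu'$ followed by integration by parts yields a Pohozaev-type identity $\tfrac{1}{2}\|u'\|_2^2 + \tfrac{1}{6}\|u\|_6^6 = \tfrac{\om}{2}\|u\|_2^2$, because the boundary contributions at the origin carry the explicit prefactor $x$ and vanish. Multiplication by $u$ and integration instead produces
\begin{equation*}
\|u\|_6^6 = \|u'\|_2^2 + \om\|u\|_2^2 + u(0^+)u'(0^+) - u(0^-)u'(0^-),
\end{equation*}
where the dipole conditions make the boundary term cancel identically. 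Solving the two relations jointly yields $\|u'\|_2^2 = \tfrac{\om}{2}\|u\|_2^2$ and $\|u\|_6^6 = \tfrac{3\om}{2}\|u\|_2^2$, so $E_0(u_j) = 0$.

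Finally, the mass formulas come from direct integration of $u^2 = \sqrt{3\om}\sech(2\sqrt{\om}(x - \xi^\pm))$ on each half-line, using the antiderivative $\int \sech(s)\ds = \arctan(\sinh s) + C$. From the previous step, $\sinh(X) = \tanh(X)/\sech(X) = \pm 1/\tau^2$ and $\sinh(Y) = \pm\tau^2$, with signs depending on whether one is treating $u_1$ or $u_2$. Combining the positive and negative half-line contributions and converting via $\arctan(1/\tau^2) = \arcsin(1/\sqrt{1+\tau^4})$ produces the stated \eqref{eq:mass-u1-u2}. I expect the principal difficulty to be purely the bookkeeping: tracking the signs of the shifts for $u_1$ versus $u_2$, combining the two half-line contributions with the right endpoints of integration, and converting between the $\arctan$ and $\arcsin$ presentations so as to land precisely on the claimed form.
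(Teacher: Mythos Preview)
Your argument is correct and complete. The main substantive difference from the paper is in how you establish $E_0(u_j)=0$. The paper quotes explicit integral formulas for $\|u_i\|_2^2$ and $\|u_i\|_6^6$ from \cite{anv} (via the substitution $t=\tanh(\sigma\sqrt{\omega}x)$), then uses the elementary identity $\int\sqrt{1-t^2}\,dt=\tfrac{1}{2}\bigl(t\sqrt{1-t^2}+\int (1-t^2)^{-1/2}\,dt\bigr)$ to deduce $\|u_i\|_6^6=\tfrac{3\omega}{2}\|u_i\|_2^2$, and combines this with the Nehari relation. Your route---the Pohozaev identity from multiplying by $xu'$, paired with the same Nehari relation---reaches $\|u_j\|_6^6=\tfrac{3\omega}{2}\|u_j\|_2^2$ without ever writing down the explicit norm integrals; the key observation that the boundary terms in the Pohozaev computation carry a factor of $x$ and therefore vanish at the origin is what makes this work despite the discontinuity. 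This is cleaner and more structural than the paper's computation. For the classification of solutions the paper simply cites \cite{anv}, whereas you rederive it; and for the mass formulas both proofs amount to the same direct integration, with your $\arctan(\sinh s)$ antiderivative corresponding to the paper's $\arcsin t$ under $t=\tanh s$.
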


\begin{proof}
The expressions for $u_1$ and $u_2$ can be found in \cite[Proposition 8.8]{anv}. On the other hand, by \cite[Proposition 8.9]{anv}, we have that
\begin{equation}
\label{eq:uitau2}
    \|u_i\|_2^2=\frac{\sqrt{3}}{2}\left(\int_{-1}^1 \frac{1}{\sqrt{1-s^2}}\,ds+(-1)^i\int_{\frac{1}{\sqrt{1+\tau^4}}}^{\frac{\tau^2}{\sqrt{1+\tau^4}}}\frac{1}{\sqrt{1-s^2}}\,ds\right),\quad i=1,2,
\end{equation}
and
\begin{equation}
\label{eq:uitau6}
    \|u_i\|_6^6=\frac{3\sqrt{3}}{2}\omega\left(\int_{-1}^1 \sqrt{1-s^2}\,ds+(-1)^i\int_{\frac{1}{\sqrt{1+\tau^4}}}^{\frac{\tau^2}{\sqrt{1+\tau^4}}}\sqrt{1-s^2}\,ds\right),\quad i=1,2.
\end{equation}
Since $u_i$ solves \eqref{eq:stat-sol-dip} for $i=1,2$, then $u_i\in N_\om$, namely
\begin{equation}
\label{eq:uitau-nehari}
\|(u_i)'\|_2^2+\om\|u_i\|_2^2=\|u_i\|_6^6.   
\end{equation}
Moreover, it holds that 
\begin{equation*}
    \int\sqrt{1-s^2}\,ds=\frac{1}{2}\left(s\sqrt{1-s^2}+\int\frac{1}{\sqrt{1-s^2}}\,ds\right),
\end{equation*}
thus by applying it to \eqref{eq:uitau6} and recalling \eqref{eq:uitau2}, we get
\begin{equation*}
    \|u_i\|_6^6=\frac{3}{2}\omega\|u_i\|_2^2,
\end{equation*}
that, together with \eqref{eq:uitau-nehari}, leads to $E_0(u_i)=0$ for $i=1,2$. Moreover, by direct computations one deduces \eqref{eq:mass-u1-u2}, completing the proof.
\end{proof}

We are now ready to prove Theorem \ref{thm:gs-crit-dip}.

\begin{proof}[Proof of Theorem \ref{thm:gs-crit-dip}]

The proof is divided in four steps.\\

\emph{Step 1. Here we prove that
\begin{equation*}
    \E_0(\mu)=
    \begin{cases}
        0\quad &\text{if}\quad 0<\mu\leq\mu^\star,\\
        -\infty &\text{if}\quad \mu>\mu^\star.
    \end{cases}
\end{equation*}
and that Ground States do not exist if $\mu\neq\mu^\star$.}
Let us observe that, plugging \eqref{gnp-dtau} into the expression of $E_0$ and using \eqref{eq:mu-star}, one has 
\begin{equation*}
    E_0(u)\geq \frac{1}{2}\|u'\|_2^2\left(1-\frac{\mu^2}{(\mu^\star)^2}\right)\quad \forall \,u\in\Dtaumu.
\end{equation*}
Thus $E_0(u)\geq0$ for every $u\in \Dtaumu$ if $0<\mu\leq\mu^\star$, entailing that $\E_0(\mu)=0$ if $0<\mu\leq\mu^\star$. In particular, if $0<\mu<\mu^\star$, then $E_0(u)>0$ for every $u\in \Dtaumu$, entailing the nonexistence of Ground States.

Let us now fix $\mu>\mu^\star$. By definition of $K_\tau$ and \eqref{eq:mu-star}, for every $\ep>0$ there exists $v_\ep\in \Dtaumu$ such that
\begin{equation*}
    \|v_\ep\|_6^6\geq(K_\tau-\ep)\|v_\ep'\|_2^2\|v_\ep\|_2^4,
\end{equation*}
thus
\begin{equation*}
E_0(v_\ep)<\frac{1}{2}\left(1-\frac{\mu^2}{(\mu^\star)^2}+\frac{\ep}{3}\mu^2\right)\|v_\ep'\|_2^2.
\end{equation*} 
Choosing $\ep>0$ sufficiently small, one gets $E_0(v_\ep)<0$, thus $\E_0(\mu)=-\infty$ and Ground States do not exist.\\ 

\emph{Step 2. Here we prove that $\mu^\star<\frac{\sqrt{3}}{2}\pi$.} By \emph{Step 1}, we have  $E_0(u)>0$ for every $u\in\ \Dtaumu$ if $0<\mu<\mu^\star$. Moreover, by Lemma \ref{lem:stat-sol-dip} there exists a function $u_1\in \Dtau$ satisfying $\|u_1\|_2^2 =\frac{\sqrt{3}}{2}\left(\frac{\pi}{2}+2 \arcsin\left(\frac{1}{\sqrt{1+\tau^4}}\right)\right)$ and $E_0(u_1)=0$, hence 
\[
\mu^\star\leq\|u_1\|_2^2= \frac{\sqrt{3}}{2}\left(\frac{\pi}{2}+2 \arcsin\left(\frac{1}{\sqrt{1+\tau^4}}\right)\right)<\frac{\sqrt{3}}{2}\pi.\\
\]

\emph{Step 3. Existence of Ground States at mass $\mu^\star$.}
Let $u_n \subset \Dtau_{\mu^\star}$ be a maximizing sequence for the Gagliardo-Nirenberg inequality \eqref{eq:gn-crit}, namely a sequence such that 
\begin{equation}
\label{eq:max-seq-gn}
    \frac{\|u_n\|_{6}^6}{\|u_n'\|_2^2}\to K_\tau(\mu^\star)^2=3.
\end{equation}
By Lemma \ref{lem:rearr}
one can assume without loss of generality that $u_n$ is monotonically increasing on $\Rm$. Moreover, by performing mass-preserving transformations, one can assume  $\|u_n'\|_2^2=1$ for every $n$. This entails  $E_0(u_n)\to 0$ as $n\to+\infty$, thus $u_n$ is also a minimizing sequence for $E_0$ at mass  $\mu^\star$. In particular, since both $\|u_{n_{|\Rm}}\|_{H^1(\Rm)}$ and $\|u_{n_{|\Rp}}\|_{H^1(\Rp)}$ are bounded, there exists $u\in H^1(\Rm)\oplus H^1(\Rp)$ such that, up to subsequences, $u_n\deb u$ in $H^1(\Rm)\oplus H^1(\Rp)$. In addition, since both $u_{n_{|\Rm}}\to u_{|\Rm}$ in $L^\infty_{\text{loc}}(\Rm)$ and $u_{n_{|\Rp}}\to u_{|\Rp}$ in $L^\infty_{\text{loc}}(\Rp)$, it follows that $u_n(0^\pm)\to u(0^\pm)$, hence $u\in \Dtau$. Let $m:=\|u\|_{L^2(\R)}$ and observe that $m\leq\mu^\star$ by weak lower semicontinuity of the norm. 

Suppose first that $m=0$, i.e. $u\equiv 0$. Since $u_n\deb 0$ in $H^1(\Rm)\oplus H^1(\Rp)$, by applying Lemma \ref{lem:gn-class-const} and dividing by  $\|u_n'\|_2^2$, we get
\begin{equation}
\label{eq:low-bound-opt}
    \frac{\|u_n\|_{L^6(\R)}^6}{\|u_n'\|_2^2}\leq\frac{4}{\pi^2}(\mu^\star)^2+o(1),\quad n\to+\infty.
\end{equation}
By \emph{Step 2}, one has $\frac{4}{\pi^2}(\mu^\star)^2\leq \frac{4}{\pi^2}\|u_1\|_2^4<3$, hence \eqref{eq:low-bound-opt} is in contradiction with \eqref{eq:max-seq-gn}, ruling out the case $m=0$.

Suppose now that $0<m<\mu^\star$. By a standard application of Brezis-Lieb lemma \cite{bl}, there results that 
\begin{equation}
\label{eq:E0-un-u}
E_0(u_n)=E_0(u_n-u)+E_0(u)+o(1),\quad n\to+\infty.
\end{equation}
Since $u_n-u\deb 0$ in $H^1(\Rm)\oplus H^1(\Rp)$, then by Lemma \ref{lem:gn-class-const}
\begin{equation}
\label{eq:E0-un-u-2}
    E_0(u_n-u)\geq \frac{1}{2}\left(1-\frac{4}{3\pi^2}\|u_n-u\|_2^4\right)+o(1),\quad\text{as}\quad n\to+\infty.
\end{equation}
Therefore, by \eqref{eq:E0-un-u}, \eqref{eq:E0-un-u-2} and the fact that $\|u_n-u\|_2^4\leq (\mu^\star)^2<\frac{3\pi^2}{4}$, one has
\begin{equation*}
    E(u)\leq \liminf_n E(u_n)=0.
\end{equation*}
 Since the function $\sqrt{\frac{\mu^\star}{m}}u$ belongs to $\Dtau_{\mu^\star}$ and
\begin{equation*}
    E_0\left(\sqrt{\frac{\mu^\star}{m}}u\right)=\frac{\mu^\star}{m}\frac{1}{2}\|u'\|_2^2-\left(\frac{\mu^\star}{m}\right)^3\frac{1}{6}\|u_n\|_6^6<\frac{\mu^\star}{m}E_0(u)\leq 0,
\end{equation*}
we get a contradiction with the fact that $\E_0(\mu^\star)=0$. Hence,  $m=\mu^\star$ and 
\begin{equation*}
    \|u_n-u\|_6^6\leq K_\tau\|(u_n-u)'\|_2^2\|u_n-u\|_2^4\to 0\quad\text{as}\quad n\to+\infty,
\end{equation*}
entailing that $u$ is a Ground State at mass $\mu^\star$.\\

\emph{Step 4. Conclusion.} The fact that Ground States at mass $\mu^\star$ are also optimizer of the Gagliardo-Nirenberg \eqref{eq:gn-crit} follows from the fact that in \emph{Step 3} we have shown that
a maximizing sequence for inequality \eqref{eq:gn-crit} at mass $\mu^\star$ is also a minimizing sequence for $E_0$ at the same mass, and also the opposite trivially holds.

On the other hand, the exact value of $\mu^\star$ follows from the fact that Ground States of $E_0$ solve \eqref{eq:stat-sol-dip}, hence $\mu^\star$ will be the mass of the least-mass stationary state: in particular, $\mu^\star=\|u_1\|_2^2$, with $u_1$ as in Lemma \ref{lem:stat-sol-dip}. The exact value of $K_\tau$ is computed starting from the expression of $\mu^\star$ and using \eqref{eq:mu-star}.

Finally, the existence of the further positive stationary solution at mass $\widetilde{\mu}$ follows by the analysis of stationary states in Lemma \ref{lem:stat-sol-dip}.
\end{proof}

We prove now Theorem \ref{thm:gs-crit}.

\begin{proof}[Proof of Theorem \ref{thm:gs-crit}]
Let $\mu^\star$ be as in the statement of Theorem \ref{thm:gs-crit-dip} and suppose first that $\mu>\mu^\star$. Since $E_\alpha(u)\leq E_0(u)$ for every $u\in\Dtaumu$ and $\E_0(\mu)=-\infty$ if $\mu>\mu^\star$, then $\E_\alpha(\mu)=-\infty$ if $\mu>\mu^\star$. 
Suppose now that $\mu=\mu^\star$. By Theorem \ref{thm:gs-crit-dip}, we know that there exists a Ground State $u$  at mass $\mu^\star$ for $E_0$, satisfying $E_0(u)=0$. By performing mass-preserving transformations $u_\la(x):=\sqrt{\la}u(\la x)$, one gets
\begin{equation*}
    E_\alpha(u_\la)=\la^2 E_0(u)-\la\frac{\alpha}{2}|u(0^-)|^2=-\la\frac{\alpha}{2}|u(0^-)|^2\to -\infty\quad \text{as}\quad\la\to+\infty,
\end{equation*}
entailing that $\E_\alpha(\mu^\star)=-\infty$. We are left to study the case when $0<\mu<\mu^\star$. First of all, observe that, given $v\in \Dtaumu$ with $v(0^-)\neq 0$, by performing mass-preserving transformations one gets
\begin{equation}
    E_\alpha(v_\la)=\la^2 E_\alpha(v)-\la\frac{\alpha}{2} |v(0^-)|^2<0
\end{equation}
for $\la$ sufficiently small, thus $\E_\alpha(\mu)<0$.
Let now $u_n$ be a minimizing sequence for $E_\alpha$ at mass $\mu$. By applying \eqref{eq:gn-crit} and \eqref{gninf-dtau} and using the definition \eqref{eq:mu-star} of $\mu^\star$, there results 
\begin{equation*}
    0>E_\alpha(u_n)\geq \frac{1}{2}\left(1-\frac{\mu}{\mu^\star}\right)\|u_n'\|_2^2-\frac{\alpha}{2}C\sqrt{\mu}\|u_n'\|_2,
\end{equation*}
which entails that $u_n$ is bounded in $H^1(\R^-)\oplus H^1(\R^+)$. Therefore, there exists $u\in H^1(\Rm)\oplus H^1(\Rp)$ such that, up to subsequences, $u_n\deb u$ in $H^1(\Rm)\oplus H^1(\Rp)$. This entails $u_n\deb u$ in $L^\infty_{\text{loc}}(\R)$, in particular $u_n(0^\pm)\to u(0^\pm)$, so that $u\in \Dtau$. By weak lower semicontinuity of the norm, we have $m:=\|u\|_{L^2(\R)}^2\leq \mu$. Suppose first that $m=0$, i.e. $u\equiv 0$. In this case, $u_n(0^-)\to 0$ as $n\to+\infty$, thus by Theorem \ref{thm:gs-crit-dip}
\begin{equation*}
0>\E_\alpha(\mu)=\lim_n E_\alpha(u_n)=\lim_n E_0(u_n)\geq 0,   
\end{equation*}
which is a contradiction, hence $m>0$. Suppose instead that $0<m<\mu$. By applying Brezis-Lieb Lemma \cite{bl}, we obtain that
\begin{equation}
\label{eq:bl-Ealpha}
E_\alpha(u_n)=E_\alpha(u_n-u)+E_\alpha(u)+o(1),\quad n \to+\infty.
\end{equation}
Since $u_n-u\deb 0$ in $H^1(\Rm)\oplus H^1(\Rp)$ and $u_n(0^-)\to u(0^-)$ as $n\to+\infty$, then by Lemma \ref{lem:gn-class-const} and the fact that $\|u_n-u\|_2^4\leq \mu^\star$ there results that
\begin{equation*}
    \liminf_n E_\alpha(u_n-u)\geq 0,
\end{equation*}
that, together with \eqref{eq:bl-Ealpha}, leads to
\begin{equation}
\label{eq:Eal<Epsal}
    E_\alpha(u)\leq \liminf_n E_\alpha(u_n)=\E_\alpha(u).
\end{equation}

On the other hand, there exists $\beta>1$ such that $\beta u\in \Dtaumu$ and, using \eqref{eq:Eal<Epsal}, it holds
\begin{equation*}
   \E_\alpha(\mu)\leq E_\alpha(\beta u)=\beta^2\frac{1}{2}\|u'\|_2^2-\beta^6\frac{1}{6}\|u\|_6^6-\beta^2\frac{\alpha}{2}|u(0^-)|^2<\beta^2 E_\alpha(u)\leq \E_\alpha(u),
\end{equation*}
which is a contradiction, hence $m=\mu$ and $u$ is a Ground State of $E_\alpha$ at mass $\mu$. 

To conclude the proof of Theorem \ref{thm:gs-crit}, it is sufficent to observe that the existence of the further positive stationary solution is guaranteed by the analysis of stationary solutions in Proposition \ref{prop:stat-crit}.
    
\end{proof}

\section{Stationary States}
\label{sec:stat}

In the present Section, we present some results about the stationary states of $E_\alpha$, namely the critical points of \eqref{eq:energy} satisfying the constraint \eqref{eq:mass-const}.

In the first result, we report the equations satisfied by all the stationary states of $E_\alpha$, including Ground States.

\begin{lemma}
\label{lem:stat-el}
Every stationary state $u\in \Dtaumu$ of the energy $E_\alpha$ solves for some $\omega\in\R$ the system
\begin{equation}
\label{eq:bound-state}
\begin{cases}
    u''+|u|^{2\sigma}u=\om u\quad \text{on}\quad \R\setminus\{0\},\quad u\in H^2(\R\setminus\{0\}),\\
    u(0^+)=\tau u(0^-),\\
    u'(0^-)-\tau u'(0^+)=\alpha u(0^-).
\end{cases}
    \end{equation}
\end{lemma}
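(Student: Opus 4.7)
The plan is to derive the system by a standard Lagrange-multiplier/Euler-Lagrange argument, carried out in two stages: interior regularity/equation first, then the boundary conditions.

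First I would record the critical-point condition. Since $u\in\Dtaumu$ is a constrained critical point of $E_\alpha$ under the single smooth constraint $\|u\|_{L^2(\R)}^2=\mu$, the Lagrange multiplier rule produces an $\omega\in\R$ such that for every $\phi\in\Dtau$
\begin{equation}
\int_{\R^-}\!u'\phi'\dx+\int_{\R^+}\!u'\phi'\dx-\int_{\R}|u|^{2\sigma}u\,\phi\dx-\alpha\,u(0^-)\phi(0^-)+\omega\int_\R u\phi\dx=0.
\end{equation}
The quadratic form $u\mapsto\|u'\|_{L^2(\R^\pm)}^2-\alpha|u(0^-)|^2$ is continuous on $\Dtau$ and the nonlinear part is of class $C^1$ (since by Lemma \ref{lem:gn-dtau} the embedding $\Dtau\hookrightarrow L^{2\sigma+2}(\R)\cap L^\infty(\R)$ holds), so this step needs only a routine verification of Fr\'echet differentiability.

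Next I would extract the interior equation. Testing the identity above with arbitrary $\phi\in C^\infty_c(\R^-)$ and $\phi\in C^\infty_c(\R^+)$ separately (both lie in $\Dtau$ and annihilate the boundary term at $0$) yields in the distributional sense
\begin{equation}
-u''-|u|^{2\sigma}u+\omega u=0\quad\text{on}\quad\R\setminus\{0\}.
\end{equation}
Since $u\in H^1(\R^\pm)\hookrightarrow L^\infty(\R^\pm)$, the right-hand side $|u|^{2\sigma}u-\omega u$ of $u''=\omega u-|u|^{2\sigma}u$ lies in $L^2_{\mathrm{loc}}(\R^\pm)$; a bootstrap argument (or direct inspection) then shows $u\in H^2(\R^-)\oplus H^2(\R^+)$. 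In particular $u,u'\in C_0(\R^\pm)$, so the traces $u(0^\pm)$ and $u'(0^\pm)$ are well defined and the limits at $\pm\infty$ vanish.

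Finally I would derive the jump condition on $u'$. Now that the equation holds classically on $\R\setminus\{0\}$ and boundary/infinity traces exist, I would integrate by parts on each half-line in the Lagrange identity with a \emph{general} $\phi\in\Dtau$ (i.e.\ $\phi(0^+)=\tau\phi(0^-)$):
\begin{equation}
\int_{\R^-}\!u'\phi'\dx=u'(0^-)\phi(0^-)-\int_{\R^-}\!u''\phi\dx,\qquad\int_{\R^+}\!u'\phi'\dx=-u'(0^+)\phi(0^+)-\int_{\R^+}\!u''\phi\dx.
\end{equation}
Substituting these and the PDE into the Lagrange identity, all integral terms cancel and one is left with
\begin{equation}
\bigl[u'(0^-)-\tau u'(0^+)-\alpha u(0^-)\bigr]\phi(0^-)=0.
\end{equation}
Choosing $\phi\in\Dtau$ with $\phi(0^-)=1$ (e.g.\ a smooth bump on $\R^-$ equal to $1$ at $0^-$, patched with $\tau$ times a bump on $\R^+$) yields $u'(0^-)-\tau u'(0^+)=\alpha u(0^-)$, completing the system.

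No step presents a real obstacle: this is the classical procedure. The only point that deserves mild care is the correct bookkeeping of signs and of the constraint $\phi(0^+)=\tau\phi(0^-)$ when integrating by parts, so that both the $u'(0^-)$ and $u'(0^+)$ contributions are evaluated against the same scalar $\phi(0^-)$ and the jump condition comes out with the stated coefficients.
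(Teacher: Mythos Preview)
Your proposal is correct and follows essentially the same route as the paper's proof: apply the Lagrange multiplier rule to obtain the weak identity, test with $\phi\in C_c^\infty(\R^\pm)$ to get the interior equation and $H^2(\R^\pm)$ regularity, then integrate by parts against a general $\phi\in\Dtau$ and use $\phi(0^+)=\tau\phi(0^-)$ to extract the jump condition on $u'$. The only cosmetic difference is that you spell out a few justifications (Fr\'echet differentiability, vanishing of traces at $\pm\infty$) that the paper leaves implicit.
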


\begin{proof}
Let $u$ be a stationary state of $E_\alpha$ subject to the constraint $u\in\Dtaumu$. Then by the Lagrange Multiplier Theorem there exists $\omega \in \R$ such that for any $\eta\in \Dtau$ it holds 

\begin{align*}
\intRneg \left(u'\eta'+\omega u \eta-|u|^{2\sigma}u\eta\right)\dx
 +\intRpos \left(u'\eta'+\omega u \eta-|u|^{2\sigma}u\eta\right)\dx  - \alpha u(0^-)\eta(0^-) = 0.
\end{align*}
If we pick $\eta \in C_c ^{\infty}(\R^+)$ or $\eta \in C_c ^{\infty}(\R^-)$, we deduce that $u\in H^2(\R\setminus\{0\})$ and solves the equation $u''+|u|^{2\sigma}u=\omega u$ both on $\Rm$ and $\Rp$. Moreover, by standard elliptic regularity, it is possible to show that $u\in C^2(\R\setminus\{0\})$. For what concerns the conditions at the origin specified in \eqref{eq:bound-state}, the first one holds since $u\in\Dtau$, while for the second we proceed integrating by parts the terms $\int_{\R^\pm}u'\eta' \dx$ and using the equation on $\R^\pm$. Then for any $\eta \in \Dtau$ with $\eta(0^-)\neq 0$ it follows that
\begin{equation*}
u'(0^-)\eta(0^-)-u'(0^+)\eta(0^+) = \alpha u(0^-)\eta(0^-),
\end{equation*}
that, together with the fact that $\eta\in \Dtau$, leads to 
\begin{equation*}
u'(0^-)\eta(0^-)-u'(0^+)\tau\eta(0^-) = \alpha u(0^-)\eta(0^-),    
\end{equation*}
that coincides with the desired condition if $\eta(0^-)\neq 0$.
\end{proof}

\begin{remark}
\label{rem:operator}
Since all stationary states of $E_\alpha$ belong to $H^2(\R\setminus\{0\})$, the system \eqref{eq:bound-state} can be rewritten in a compact form as 
\begin{equation}
\label{eq:stat-Htau}
    H_{\tau,\alpha}u-|u|^{2\sigma}u+\omega u=0,\quad u\in D(H_{\tau,\alpha}),
\end{equation}
where $H_{\tau,\alpha}$ is a self-adjoint operator with domain and action given by 
\begin{equation} \begin{split}
    \label{eq:domain}
D(H_{\tau,\alpha}) & := \left\{ u \in H^2(\R \setminus \{0\}) \, : \, u(0^+)=\tau u(0^-), u'(0^-)-\tau u'(0^+)=\alpha u(0^-)\right\}
\\
H_{\tau,\alpha} u & := -u'',\quad x\neq 0.
\end{split} \end{equation} 

The operator $H_{\tau,\alpha}$ can be obtained as a self-adjoint extension in $L^2(\R)$
of the Laplacian restricted to the set of the functions vanishing in a neighbourhood of the origin (\cite{AN-09,S-86}),
so by definition (\cite{AGH-KH-88}) it is
 a pointwise perturbation of the Laplacian.
\end{remark}

In the next result we show that there are two important thresholds for the parameter $\omega$:  below the first threshold no solutions exist, between the first and the second threshold only one positive solution exists, while above the second threshold two positive solutions cohexist.

\begin{proposition}
\label{prop:stationary}
Let $\sigma>0$, $\alpha>0$, $\tau>1$ and $\omega\in\R$. Then, denoted by $m$ the multiplicity of the set of positive solutions to \eqref{eq:bound-state} in $L^2(\R)$, there results that
\begin{equation}
    m=\begin{cases}
    0,\quad \omega \leq \frac{\alpha^2}{(\tau^2+1)^2}\\
    1,\quad \frac{\alpha^2}{(\tau^2+1)^2}<\om\leq\frac{\alpha^2}{(\tau^2-1)^2}\\
    2, \quad \omega > \frac{\alpha^2}{(\tau^2-1)^2}
    \end{cases}
\end{equation}

\noindent
Moreover, every solution to \eqref{eq:bound-state} has the form
\begin{equation}
\label{sol}
u(x) = 
\bigg \{ \begin{array}{rl}
\varphi_{\om}(x+x_-), & x \in \R^{-} \\
\varphi_{\om}(x+x_+), & x \in \R^{+} \\
\end{array}
\end{equation}
where $\varphi_{\om}$ is the one dimensional soliton and $x_-, x_+ \in \R$ are the solutions to the system 
\begin{equation}
\label{condition3}
\begin{cases}
\tau^2\sqrt{\om} T_+=\sqrt{\om} T_- + \alpha,\\
\tau^{2\sigma} T_-^2-T_+^2=\tau^{2\sigma}-1,\\
T_\pm=\tanh(\sigma \sqrt{\om} x_\pm).
\end{cases}
\end{equation} 
\end{proposition}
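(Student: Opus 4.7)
The plan is to turn Proposition \ref{prop:stationary} into a counting question for a $2\times 2$ algebraic system on the open square $(-1,1)^2$. First, on each half-line the ODE $u''+|u|^{2\sigma}u=\omega u$ admits the first integral $(u')^2=\omega u^2-\frac{u^{2\sigma+2}}{\sigma+1}+C$. Since a positive $u\in L^2(\R^\pm)$ that solves the ODE decays, together with $u'$, at $\pm\infty$, the constant $C$ must vanish. This forces $\omega>0$ (otherwise the right-hand side is non-positive and only $u\equiv 0$ survives), and for $\omega>0$ identifies $u|_{\R^\pm}$ with a translate of the soliton $\varphi_\omega$. Hence positive $L^2$ solutions of \eqref{eq:bound-state} are exactly the functions of the form \eqref{sol}, with the parameters $x_\pm\in\R$ still to be fixed by the transmission conditions at the origin.

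To derive \eqref{condition3} I would use the explicit form $\varphi_\omega(y)=(\omega(\sigma+1))^{1/(2\sigma)}\sech^{1/\sigma}(\sigma\sqrt{\omega}\,y)$ together with the identity $\varphi_\omega'(y)/\varphi_\omega(y)=-\sqrt{\omega}\tanh(\sigma\sqrt{\omega}\,y)$. Setting $T_\pm:=\tanh(\sigma\sqrt{\omega}\,x_\pm)$, raising $\varphi_\omega(x_+)=\tau\varphi_\omega(x_-)$ to the $2\sigma$-th power and using $\sech^2=1-\tanh^2$ gives the second equation in \eqref{condition3}; combining this with the jump condition $u'(0^-)-\tau u'(0^+)=\alpha u(0^-)$ and cancelling the common factor $\varphi_\omega(x_-)>0$ produces the first one. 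Since $\tanh:\R\to(-1,1)$ is a bijection, positive $L^2$ solutions are in one-to-one correspondence with pairs $(T_+,T_-)\in(-1,1)^2$ solving \eqref{condition3}.

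The multiplicity count is then purely algebraic. Substituting $T_-=\tau^2T_+-\beta$, with $\beta:=\alpha/\sqrt{\omega}>0$, into the hyperbola equation yields the quadratic
\[
P(T_+):=(\tau^{2\sigma+4}-1)T_+^2-2\tau^{2\sigma+2}\beta\,T_++\tau^{2\sigma}(\beta^2-1)+1.
\]
A short computation gives $\Delta/4=(\tau^{2\sigma+4}-1)(\tau^{2\sigma}-1)+\tau^{2\sigma}\beta^2>0$, so $P$ has two distinct real roots whose sum $2\tau^{2\sigma+2}\beta/(\tau^{2\sigma+4}-1)$ is strictly positive. The crucial observation is the factorization
\[
P(1)=\tau^{2\sigma}(\beta-\tau^2-1)(\beta-\tau^2+1),\qquad P(-1)=\tau^{2\sigma}(\beta+\tau^2-1)(\beta+\tau^2+1),
\]
which shows that $P(-1)>0$ for every $\beta>0$, while $P(1)$ changes sign exactly at the two thresholds $\beta=\tau^2\pm 1$, equivalently $\omega=\alpha^2/(\tau^2\mp 1)^2$. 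Combined with the positivity of the sum of the roots, a routine sign discussion places both roots outside $(-1,1)$ when $\beta>\tau^2+1$, exactly one root inside when $\tau^2-1<\beta<\tau^2+1$, and both roots inside when $0<\beta<\tau^2-1$.

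The only subtle point is the handling of the boundary values $T_+=\pm 1$, which correspond to $x_+=\pm\infty$ and therefore do not yield genuine $L^2$ solutions. At the threshold $\beta=\tau^2+1$ the roots of $P$ are $T_+=1$ and a second value that turns out to lie in $(1,+\infty)$, so no admissible pair survives; this explains the strict inequality $\omega>\alpha^2/(\tau^2+1)^2$ in the transition from $m=0$ to $m=1$. At the other threshold $\beta=\tau^2-1$, one root equals $1$ while the other lies in $(-1,1)$, so the equality case $\omega=\alpha^2/(\tau^2-1)^2$ still produces $m=1$. Finally, for every admissible $T_+\in(-1,1)$ the paired value $T_-=\tau^2T_+-\beta$ automatically lies in $(-1,1)$, since it is constrained to one of the two hyperbola branches, each of which joins opposite corners of $[-1,1]^2$ without leaving this square. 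The only real difficulty is this bookkeeping of boundary effects; everything else is routine algebra.
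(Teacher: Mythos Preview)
Your argument is correct and follows the same route as the paper: reduce to the quadratic in one of the variables $T_\pm$ and locate its roots relative to $(-1,1)$. The only cosmetic differences are that the paper eliminates $T_+$ and solves for $T_-$, writing the two roots explicitly and checking membership in $(-1,1)$ by direct inspection, whereas you eliminate $T_-$, solve for $T_+$, and replace the explicit root formulas by the factorizations of $P(\pm 1)$---a cleaner device that makes the thresholds $\beta=\tau^2\pm 1$ emerge without any square roots. Your treatment of the boundary cases $\beta=\tau^2\pm 1$ is also more explicit than the paper's. One small point of exposition: in the regime $\beta>\tau^2+1$ the positivity of the sum of the roots alone does not rule out both roots lying in $(-1,1)$; what you need (and what is indeed true, since $\tau^{2\sigma+2}(\tau^2+1)>\tau^{2\sigma+4}-1$) is that the sum exceeds $2$, so at least one root is $>1$, and then $P(1)>0$ forces the other one to be $>1$ as well.
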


\begin{proof}
It is well known (see e.g. \cite{anv}) that the equation $u''+|u|^{2\sigma}u=\omega u$ has nontrivial solutions in $L^2(\R^\pm)$ if and only if $\omega>0$ and that the only $L^2$ solutions on each half-line are given by $\varphi_\om(x+T)$, where $\varphi_\om$ is given by 
\begin{equation}
\label{soliton}
\varphi_{\om}(x):=\left( \omega (\sigma+1) \right)^{\frac{1}{2\sigma}} \cosh^{-\frac{1}{\sigma}} \left( \sigma \sqrt{\omega} x \right),
\end{equation} 
and $T$ is a suitable real number. Hence, the structure of the solution is given by \eqref{sol}, with $x_\pm$ to be determined, and the problem is reduced to solve the system \eqref{condition3}.

Denote now $T_\pm=\tanh(\sigma \sqrt{\om} x_\pm)$. By the condition $u(0^+)=\tau u(0^-)$, it follows that
\begin{equation*}
\cosh^{-\frac{1}{\sigma}} \left( \sigma \sqrt{\omega} x_+ \right) =\tau \cosh^{-\frac{1}{\sigma}} \left( \sigma \sqrt{\omega} x_- \right),
\end{equation*}
that entails  $(1-T_+^2)^\frac{1}{2\sigma}=\tau(1-T_-^2)^\frac{1}{2\sigma}$, i.e. the second equation in \eqref{condition3}. 
On the other hand, by $u'(0^-)-\tau u'(0^+)=\alpha u(0^-)$ the first equation in \eqref{condition3} follows, thus the proof of \eqref{condition3} is complete.

In order to deduce the multiplicity of the set of positive solutions of \eqref{eq:bound-state}, we are reduced to find the multiplicity of the solutions $(T_-, T_+)\in (-1,1)\times(-1,1)$ to the system 
\begin{equation}
\label{system:t+t-}
\begin{cases}
\tau^2\sqrt{\om} T_+=\sqrt{\om} T_- + \alpha,\\
\tau^{2\sigma} T_-^2-T_+^2=\tau^{2\sigma}-1.
\end{cases}
\end{equation}

By substituting the first equation  of \eqref{system:t+t-} in the second, one obtains that $T_-$ has to solve the equation
\begin{equation*}
    \tau^{2\sigma}T_-^2-\left(\frac{1}{\tau^2}T_-^2+\frac{\alpha}{\tau^2}\sqrt{\om}\right)^2=\tau^{2\sigma}-1,
\end{equation*}
 that can be rewritten as
 \begin{equation}
 \label{eq:T-}
    \left(\tau^{2\sigma+4}-1\right) T_-^2 -2\frac{\alpha}{\sqrt{\omega}}T_- -\left(\frac{\alpha^2}{\omega}+\tau^4(\tau^{2\sigma}-1)\right)=0.
 \end{equation}
 Equation \eqref{eq:T-} has two solutions in $\R$ given by
 \begin{equation*}
 T_-^L =\frac{1}{\tau^{2\sigma+4}-1} \left(\frac{\alpha}{\sqrt{\omega}}-\tau^2 \sqrt{\frac{\alpha^2}{\omega}\tau^{2\sigma}+(\tau^{2\sigma+4}-1)(\tau^{2\sigma}-1)} \right)    
 \end{equation*}
 and
 \begin{equation*}
   T_-^R= \frac{1}{\tau^{2\sigma+4}-1} \left( \frac{\alpha}{\sqrt{\omega}}+\tau^2 \sqrt{\frac{\alpha^2}{\omega}\tau^{2\sigma}+(\tau^{2\sigma+4}-1)(\tau^{2\sigma}-1)} \right). 
 \end{equation*}

In order to check the multiplicity of the set of solutions, it is sufficient to check when $T_-^L$ and $T_-^R$ belong to  $(-1,1)$. In particular, $T_-^L\in(-1,1)$ if and only if $\omega>\frac{\alpha^2}{(\tau^2+1)^2}$, while $T_-^R\in(-1,1)$ if and only if $\omega>\frac{\alpha^2}{(\tau^2-1)^2}$, entailing the thesis.
\end{proof}

\begin{remark}
Let us highlight that system \eqref{system:t+t-} has an easy geometric representation, as shown in Figure \ref{sist}. Indeed, one can observe that the first equation in \eqref{system:t+t-} 
represents a line approaching the origin as $\omega\to+\infty$, but never reaching it since $\alpha \neq 0$. On the other hand, the second equation represents a hyperbola that does not depend on $\om$ and crosses the vertices of the square. The intersections between the line and the hyperbola give us the solutions to \eqref{system:t+t-}.

\end{remark}

\begin{figure}[]
\centering
\includegraphics[width=0.6\columnwidth]{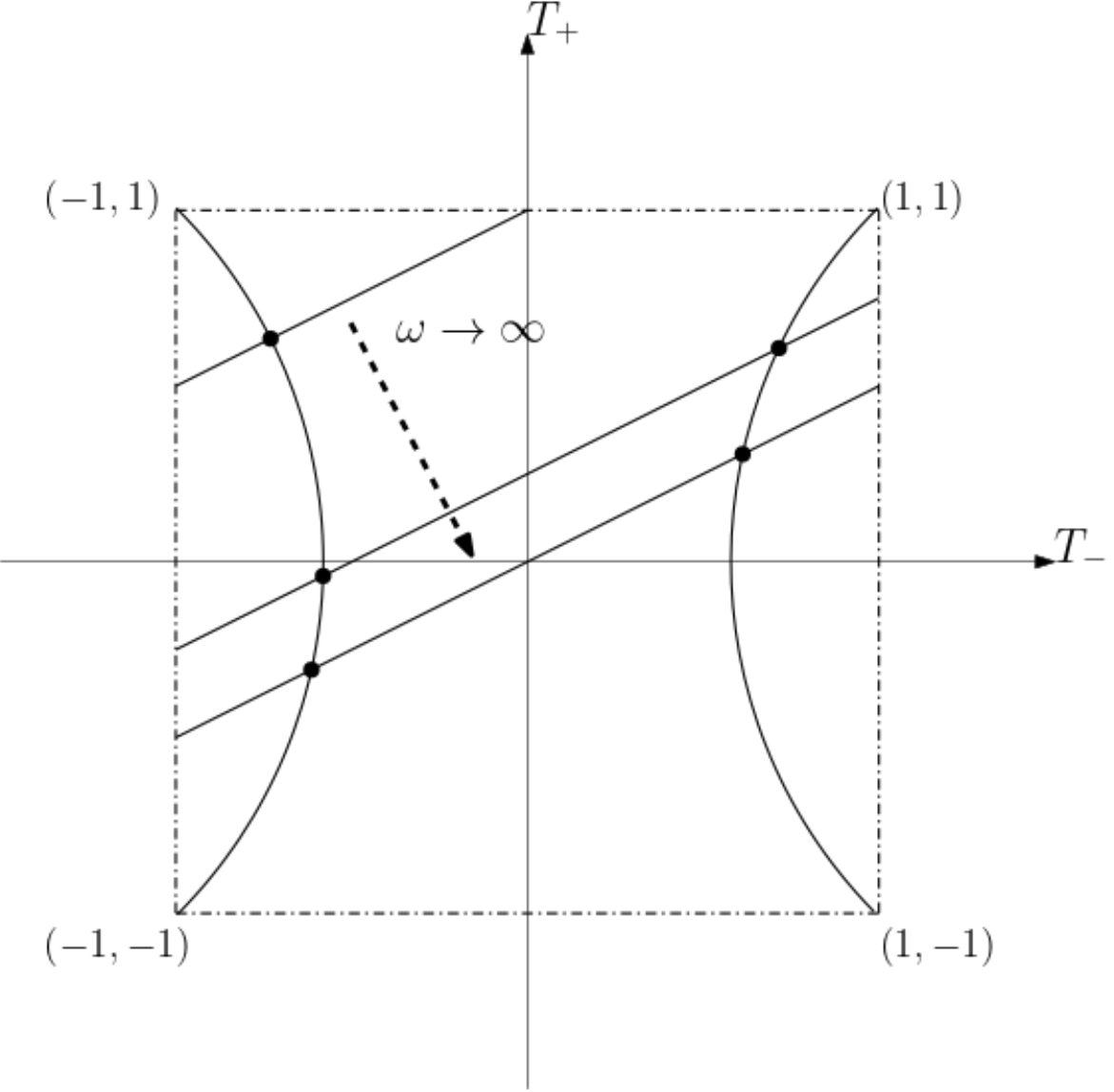}
\caption{Geometric representation of the system \eqref{system:t+t-} for $\tau > 1$, where the dots represent the solutions to the system for $\om \rightarrow \infty$.}
\label{sist}
\end{figure}


\subsection*{Classification of stationary states} 
 
Let us denote with $u^L_\om$ and $u^R_\om$ the two positive solutions of \eqref{eq:bound-state}. In particular, $u^L_\om$ exists for every $\omega>\frac{\alpha^2}{(\tau^2+1)^2}$, while $u^R_\om$ exists for every $\omega>\frac{\alpha^2}{(\tau^2-1)^2}$. Therefore, for $\frac{\alpha^2}{(\tau^2+1)^2}<\omega\leq\frac{\alpha^2}{(\tau^2-1)^2}$ the only positive solution is given by $u^L$, while for $\omega>\frac{\alpha^2}{(\tau^2-1)^2}$ both $u^L_\om$ and $u^R_\om$ are solutions to \eqref{eq:bound-state}.   

The associated solutions to the system \eqref{system:t+t-}, denoted respectively by $\left(T_-^L,T_+^L, x_-^L, x_+^L\right)$ and $\left(T_-^R,T_+^R, x_-^R,x_+^R\right)$, are given by
\begin{equation*}
\begin{cases}
T_-^L &=\frac{1}{\tau^{2\sigma+4}-1} \left(\frac{\alpha}{\sqrt{\omega}}-\tau^2 \sqrt{\frac{\alpha^2}{\omega}\tau^{2\sigma}+(\tau^{2\sigma+4}-1)(\tau^{2\sigma}-1)} \right)\\
T_+^L &=\frac{1}{\tau^{2\sigma+4}-1} \left(\tau^{2\sigma+2}\frac{\alpha}{\sqrt{\omega}}-\sqrt{\frac{\alpha^2}{\omega}\tau^{2\sigma}+(\tau^{2\sigma+4}-1)(\tau^{2\sigma}-1)} \right)\\
x_-^L &=\frac{1}{\sigma\sqrt{\om}}\arctanh\left(T_-^L\right)\\
x_+^L &=\frac{1}{\sigma\sqrt{\om}}\arctanh\left(T_+^L\right)
\end{cases}
\end{equation*}
and
\begin{equation*}
\begin{cases}
T_-^R &=\frac{1}{\tau^{2\sigma+4}-1} \left(\frac{\alpha}{\sqrt{\omega}}+\tau^2 \sqrt{\frac{\alpha^2}{\omega}\tau^{2\sigma}+(\tau^{2\sigma+4}-1)(\tau^{2\sigma}-1)} \right)\\
T_+^R &=\frac{1}{\tau^{2\sigma+4}-1} \left(\tau^{2\sigma+2}\frac{\alpha}{\sqrt{\omega}}+\sqrt{\frac{\alpha^2}{\omega}\tau^{2\sigma}+(\tau^{2\sigma+4}-1)(\tau^{2\sigma}-1)} \right)\\
x_-^R &=\frac{1}{\sigma\sqrt{\om}}\arctanh\left(T_-^R\right)\\
x_+^R &=\frac{1}{\sigma\sqrt{\om}}\arctanh\left(T_+^R\right)
\end{cases}
\end{equation*}

In particular, let us observe that $x_-^L<0$, $x_-^R>0$ and $x_+^R>0$ for every $\omega$ for which the corresponding solution exists, while $x_+^L<0$ when $\frac{\alpha^2}{(\tau^2+1)^2}<\om<\frac{\tau^{2\sigma}}{\tau^{2\sigma}-1}$ and $x_+^L\geq 0$ when $\om\geq \frac{\tau^{2\sigma}}{\tau^{2\sigma}-1}$. See Figure \ref{st1} and \ref{st2} for a qualitative behaviour of $u^L_\om$ and $u^R_\om$.

\begin{figure}[H]
\centering
\begin{tikzpicture}[xscale= 0.5,yscale=1.5]
    \draw[step=2,thin] (-7,0) -- (7,0);
\node at (0,0) [nodo] {};
\node at (0,-0.3) [] {$0$};
\draw[dashed,thin] (-8.2,0)--(-7.2,0)  (7.2,0)--(8.2,0); 
\draw[color=black] plot[samples=100,domain=-7:0](\x, {2 / cosh(\x - 2)  });
\draw[color=black] plot[samples=100,domain=0:7](\x, {2 / cosh(\x + 1.2) });
\end{tikzpicture}
\end{figure}

\begin{figure}[H]
\begin{tikzpicture}[xscale= 0.5,yscale=1.5]
    \draw[step=2,thin] (-7,0) -- (7,0);
\node at (0,0) [nodo] {};
\node at (0,-0.3) [] {$0$};
\draw[dashed,thin] (-8.2,0)--(-7.2,0)  (7.2,0)--(8.2,0); 
\draw[color=black] plot[samples=100,domain=-7:0](\x, {2 / cosh(\x - 1.4)  });
\draw[color=black] plot[samples=100,domain=0:7](\x, {2 / cosh(\x ) });

\end{tikzpicture}
\end{figure}

\begin{figure}[H]
\centering
\begin{tikzpicture}[xscale= 0.5,yscale=1.5]
    \draw[step=2,thin] (-7,0) -- (7,0);
\node at (0,0) [nodo] {};
\node at (0,-0.3) [] {$0$};
\draw[dashed,thin] (-8.2,0)--(-7.2,0)  (7.2,0)--(8.2,0); 
\draw[color=black] plot[samples=100,domain=-7:0](\x, {2 / cosh(\x - 2.4)  });
\draw[color=black] plot[samples=100,domain=0:7](\x, {2 / cosh(\x - 1.7) });
\end{tikzpicture}
\caption{A sketch of the stationary state $u^L$, in three different qualitative situations. It has always the profile of a tail of a soliton on the negative half-line, whereas on the positive half-line, depending on $\om$, it can be a tail, a half-soliton or presents a bump.}
\label{st1}

\end{figure}
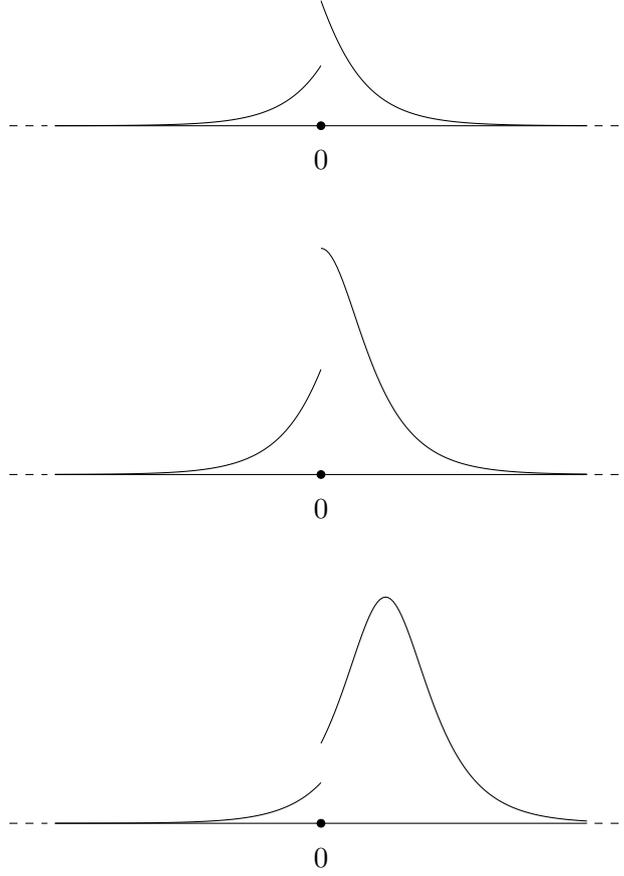

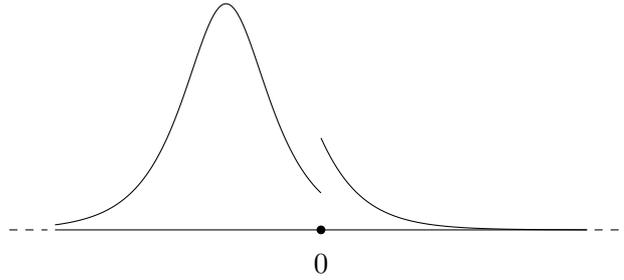
\begin{figure}[H]
\centering
\begin{tikzpicture}[xscale= 0.5,yscale=1.5]
\draw[step=2,thin] (-7,0) -- (7,0);
\node at (0,0) [nodo] {};
\node at (0,-0.3) [] {$0$};
\draw[dashed,thin] (-8.2,0)--(-7.2,0)  (7.2,0)--(8.2,0); 
\draw[color=black] plot[samples=100,domain=-7:0](\x, {2 / cosh(\x + 2.5)  });
\draw[color=black] plot[samples=100,domain=0:7](\x, {2 / cosh(\x + 1.55) });
\end{tikzpicture}

\caption{A sketch of the stationary state $u^R$. This stationary state, regardless of $\om$, has always the profile of a tail of a soliton on the positive half-line and has a bump on the negative one.}
\label{st2}
\end{figure}

\subsection{Identification of the Ground State}

In this subsection, we identify the Ground State of \eqref{eq:energy} at fixed mass $\mu$ among the stationary states $u^L_\om$ and $u^R_\om$.

Let us start with a preliminary lemma. 

\begin{lemma}
Consider $\om>\frac{\alpha^2}{(\tau^2+1)^2}$ and let $u_\om$ be a solution of \eqref{eq:bound-state}. The following identities hold:
\begin{equation}
\label{eq:mass-stat}
    \|u_\omega\|^2_2 = \frac{(\sigma+1)^{\frac{1}{\sigma}}}{\sigma} \om^{\frac{1}{\sigma}-\frac{1}{2}} \left( \int_{-1}^{1} (1-t^2)^{\frac{1}{\sigma}-1}\dt - \int_{T_-(\omega)}^{T_+(\omega)}(1-t^2)^{\frac{1}{\sigma}-1}\,dt \right)
    \end{equation}
    and
\begin{equation}
\label{eq:mass-der}
\begin{split}
    \frac{d}{d\omega}||u_\omega||^2_2=&\frac{(\sigma+1)^{\frac{1}{\sigma}}(2-\sigma)}{2\sigma^2}\omega^{\frac{1}{\sigma}-\frac{3}{2}}\left( \int_{-1}^{1} (1-t^2)^{\frac{1}{\sigma}-1}\dt - \int_{T_-(\omega)}^{T_+(\omega)}(1-t^2)^{\frac{1}{\sigma}-1}\,dt \right)\\
    &+\frac{(\sigma+1)^{\frac{1}{\sigma}}}{\sigma} \om^{\frac{1}{\sigma}-2}\frac{(\tau^{2\sigma}-1)\left(1-T_-(\omega)^2\right)^{\frac{1}{\sigma}-1}}{2\tau^{2\sigma}}\left(2\omega^{\frac{3}{2}}T_-'(\omega)+\frac{\alpha}{\tau^{2\sigma}-1}\right).
    \end{split}
\end{equation}
\end{lemma}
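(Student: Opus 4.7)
The plan is to compute the $L^2$-norm by a change of variables adapted to the soliton profile, and then to differentiate, using the system \eqref{condition3} to eliminate $T_+$ in favour of $T_-$.

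First, I would exploit the explicit form \eqref{sol}: since on $\R^\pm$ the function $u_\omega$ is a translate of $\varphi_\omega$, a change of variable $y=x+x_\pm$ yields
\begin{equation}
\|u_\omega\|_2^2=\int_{-\infty}^{x_-}\varphi_\omega(y)^2\,dy+\int_{x_+}^{+\infty}\varphi_\omega(y)^2\,dy.
\end{equation}
Then the substitution $t=\tanh(\sigma\sqrt{\omega}\,y)$ transforms $\varphi_\omega(y)^2\,dy$ into
\begin{equation}
\frac{(\sigma+1)^{1/\sigma}}{\sigma}\,\omega^{\frac{1}{\sigma}-\frac{1}{2}}\,(1-t^2)^{\frac{1}{\sigma}-1}\,dt,
\end{equation}
using $\cosh^{-2/\sigma}=(1-t^2)^{1/\sigma}$ and $dy=dt/[\sigma\sqrt{\omega}(1-t^2)]$. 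Since $y=x_\pm$ corresponds to $t=T_\pm(\omega)=\tanh(\sigma\sqrt{\omega}\,x_\pm)$, one obtains
\begin{equation}
\|u_\omega\|_2^2=\frac{(\sigma+1)^{1/\sigma}}{\sigma}\,\omega^{\frac{1}{\sigma}-\frac{1}{2}}\Bigl[\int_{-1}^{T_-(\omega)}(1-t^2)^{\frac{1}{\sigma}-1}dt+\int_{T_+(\omega)}^{1}(1-t^2)^{\frac{1}{\sigma}-1}dt\Bigr],
\end{equation}
which is \eqref{eq:mass-stat}.

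For the derivative I would split $\|u_\omega\|_2^2=A\,f(\omega)\,G(\omega)$ with $A=(\sigma+1)^{1/\sigma}/\sigma$, $f(\omega)=\omega^{1/\sigma-1/2}$, and
\begin{equation}
G(\omega)=\int_{-1}^{1}(1-t^2)^{\frac{1}{\sigma}-1}dt-\int_{T_-(\omega)}^{T_+(\omega)}(1-t^2)^{\frac{1}{\sigma}-1}dt.
\end{equation}
The term $Af'(\omega)G(\omega)$ produces at once the first summand of \eqref{eq:mass-der}, since $f'(\omega)=\frac{2-\sigma}{2\sigma}\omega^{1/\sigma-3/2}$. The second summand $Af(\omega)G'(\omega)$ requires the key algebraic step: from the second equation in \eqref{condition3} one has $1-T_+^2=\tau^{2\sigma}(1-T_-^2)$, hence
\begin{equation}
(1-T_+^2)^{\frac{1}{\sigma}-1}=\tau^{2-2\sigma}(1-T_-^2)^{\frac{1}{\sigma}-1},
\end{equation}
while differentiating the first equation in \eqref{condition3} yields $T_+'(\omega)=\tau^{-2}T_-'(\omega)-\alpha\tau^{-2}/(2\omega^{3/2})$. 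Plugging these into $G'(\omega)=-(1-T_+^2)^{\frac{1}{\sigma}-1}T_+'(\omega)+(1-T_-^2)^{\frac{1}{\sigma}-1}T_-'(\omega)$ and factoring gives
\begin{equation}
G'(\omega)=\frac{(\tau^{2\sigma}-1)(1-T_-^2)^{\frac{1}{\sigma}-1}}{2\tau^{2\sigma}}\Bigl(2\,T_-'(\omega)+\frac{\alpha}{(\tau^{2\sigma}-1)\,\omega^{3/2}}\Bigr),
\end{equation}
and multiplying by $Af(\omega)=A\,\omega^{1/\sigma-1/2}$ reproduces the second summand of \eqref{eq:mass-der} after factoring $\omega^{-3/2}$ inside the bracket.

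No genuine obstacle is expected: the only delicate point is bookkeeping in the cancellation between $T_-'$ and $T_+'$ terms, where the identities $1-T_+^2=\tau^{2\sigma}(1-T_-^2)$ and $\tau^2 T_+'(\omega)-T_-'(\omega)=-\alpha/(2\omega^{3/2})$, both direct consequences of \eqref{condition3}, must be applied carefully to land on the precise coefficients written in \eqref{eq:mass-der}.
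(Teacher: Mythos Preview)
Your proposal is correct and follows essentially the same route as the paper's proof: both compute $\|u_\omega\|_2^2$ via the substitution $t=\tanh(\sigma\sqrt{\omega}\,y)$ applied to the explicit soliton profile, and both obtain \eqref{eq:mass-der} by differentiating and then eliminating $T_+$ and $T_+'$ through the identities $1-T_+^2=\tau^{2\sigma}(1-T_-^2)$ and $T_+'=\tau^{-2}\bigl(T_-'-\tfrac{\alpha}{2}\omega^{-3/2}\bigr)$ coming from \eqref{condition3}. The bookkeeping you flag is exactly the content of the paper's computation, and your final factoring $\omega^{1/\sigma-1/2}\cdot\omega^{-3/2}=\omega^{1/\sigma-2}$ matches the stated form.
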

\begin{proof}
By using \eqref{sol} and \eqref{condition3}, it follows that
\begin{equation*}
\begin{split}
  \|u_\omega\|^2_2 &=\int_{-\infty}^{x_-}|\varphi_\omega(x)|^2\,dx+\int_{x_+}^{+\infty}|\varphi_\omega(x)|^2\,dx\\
  &=(\sigma+1)^{\frac{1}{\sigma}}\omega^{\frac{1}{\sigma}}\left(\int_{-\infty}^{x_-}\left(1-\tanh^2(\sigma\sqrt{\om}x)\right)^{\frac{1}{\sigma}}\,dx+\int_{x_+}^{+\infty}\left(1-\tanh^2(\sigma\sqrt{\om}x)\right)^{\frac{1}{\sigma}}\,dx\right)\\
  &=\frac{(\sigma+1)^{\frac{1}{\sigma}}}{\sigma}\omega^{\frac{1}{\sigma}-\frac{1}{2}}\left(\int_{-1}^1\left(1-t^2\right)^{\frac{1}{\sigma}-1}\,dt-\int_{T_-}^{T_+}\left(1-t^2\right)^{\frac{1}{\sigma}-1}\,dt\right).
  \end{split}
\end{equation*}
Computing the derivative of \eqref{eq:mass-stat}, we get
\begin{equation*}
\begin{split}
    \frac{d}{d\omega}\|u_\omega\|^2_2=&\frac{(\sigma+1)^{\frac{1}{\sigma}}(2-\sigma)}{2\sigma^2}\left( \int_{-1}^{1} (1-t^2)^{\frac{1}{\sigma}-1}\dt -\int_{T_-}^{T_+}\left(1-t^2\right)^{\frac{1}{\sigma}-1}\,dt\right)\\
    &-\frac{(\sigma+1)^{\frac{1}{\sigma}}}{\sigma} \om^{\frac{1}{\sigma}-\frac{1}{2}}\left(\left(1-T_+^2\right)^{\frac{1}{\sigma}-1}T_+'-\left(1-T_-^2\right)^{\frac{1}{\sigma}-1}T_-'\right).
    \end{split}
\end{equation*}
Using the second equation in \eqref{condition3} and observing that $T_+'=\frac{1}{\tau^2}\left(T_-'-\frac{\alpha}{2}\omega^{-\frac{3}{2}}\right)$, it holds that
\begin{equation*}
 \left(1-T_+^2\right)^{\frac{1}{\sigma}-1}T_+'-\left(1-T_-^2\right)^{\frac{1}{\sigma}-1}T_-'=-\frac{(\tau^{2\sigma}-1)\left(1-T_-^2\right)^{\frac{1}{\sigma}-1}}{2\tau^{2\sigma}\omega^{\frac{3}{2}}}\left(2\omega^{\frac{3}{2}}T_-'+\frac{\alpha}{\tau^{2\sigma}-1}\right),   \end{equation*}
 which entails \eqref{eq:mass-der}.
\end{proof}

 The next two results, about respectively the subcritical and the critical case, give the multiplicity of the stationary states.

\begin{proposition}
\label{prop:stat-sub}
Defined $\omega_\alpha:=\frac{\alpha^2}{(\tau^2-1)^2}$ and  denoted by $\mu_\alpha$ 
the mass of the soliton of parameter $\omega_\alpha$,
    fix $0<\sigma<2$, $\tau>1$ and $\mu>0$.
    
    If $\mu\leq \mu_\alpha$, then the only positive stationary state at mass $\mu$ is given by $u^L_\om$ and is the Ground State. If  $\mu>\mu_\alpha$, then there are two positive stationary states given by $u^L_\om$ and $u^R_\om$. Among them, the Ground State is given by $u^L_\om$.
\end{proposition}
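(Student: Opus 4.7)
The plan is to parameterise the two families $\{u^L_\omega\}_\omega$ and $\{u^R_\omega\}_\omega$ produced by Proposition~\ref{prop:stationary} by their mass $\mu=\|u_\omega\|_2^2$ via the explicit formula \eqref{eq:mass-stat}, and then to invoke Theorem~\ref{thm:gs-sub} together with Lemma~\ref{lem:rearr} to select the Ground State among the positive stationary states.

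First, I would analyse $\omega\mapsto\|u^L_\omega\|_2^2$ on $\bigl(\tfrac{\alpha^2}{(\tau^2+1)^2},+\infty\bigr)$ using the closed-form expressions of $T_\pm^L$ in Section~\ref{sec:stat}. As $\omega\searrow\tfrac{\alpha^2}{(\tau^2+1)^2}$ one checks $T_-^L\to -1$ and $T_+^L\to 1$, so the bracket in \eqref{eq:mass-stat} vanishes and the mass tends to $0$; as $\omega\to+\infty$, $T_\pm^L$ stabilise strictly inside $(-1,1)$ while $\omega^{1/\sigma-1/2}\to+\infty$ (thanks to $\sigma<2$), hence $\|u^L_\omega\|_2^2\to+\infty$. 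Strict monotonicity follows from \eqref{eq:mass-der}: the first summand is manifestly positive because $\sigma<2$ and $I_1-I_2>0$, while a direct differentiation of the explicit form of $T_-^L$ shows that the second summand is positive too. The intermediate value theorem then yields a unique $\omega$ realising every prescribed $\mu>0$. The analogous analysis for the $R$-branch gives $T_\pm^R\to 1$ as $\omega\searrow\omega_\alpha$, so the bracket in \eqref{eq:mass-stat} collapses to $\int_{-1}^{1}(1-t^2)^{1/\sigma-1}\,dt$ and the limiting mass is precisely $\mu_\alpha$; together with $\|u^R_\omega\|_2^2\to+\infty$ as $\omega\to+\infty$, this yields that $u^R_\omega$ exists at mass $\mu$ if and only if $\mu>\mu_\alpha$.

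To identify the Ground State I would argue by elimination. Theorem~\ref{thm:gs-sub} provides a unique nonnegative Ground State at every mass; by Lemma~\ref{lem:stat-el} it solves \eqref{eq:bound-state}, hence by Proposition~\ref{prop:stationary} it must coincide with $u^L_\omega$ or (only if $\mu>\mu_\alpha$) with $u^R_\omega$. The decisive geometric observation is that $x_-^R>0$ always, so $u^R_\omega$ on $\R^-$ is a translated soliton with interior maximum at $-x_-^R<0$, i.e.\ a bump, while on $\R^+$ it is strictly decreasing. Since $\tau>1$ forces $u^R_\omega(0^-)<u^R_\omega(0^+)$, every level $t\in\bigl(u^R_\omega(0^-),u^R_\omega(0^+)\bigr)$ is attained three times on $\R\setminus\{0\}$: twice on the bump in $\R^-$ and once on the decreasing tail in $\R^+$. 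This violates the equality condition in Lemma~\ref{lem:rearr}(i), so the rearrangement $u^{R,\star}_\omega$ has strictly smaller energy at the same mass. Consequently $u^R_\omega$ cannot be the Ground State, and $u^L_\omega$ is identified as the unique Ground State.

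The main obstacle is the strict monotonicity of $\omega\mapsto\|u^R_\omega\|_2^2$: the second summand in \eqref{eq:mass-der} is not obviously signed, and one must differentiate the explicit expression of $T_-^R$, substitute, and simplify to verify that $2\omega^{3/2}T_-^{R\,\prime}+\tfrac{\alpha}{\tau^{2\sigma}-1}$ stays positive throughout $(\omega_\alpha,+\infty)$. A softer fall-back, if the algebra becomes unpleasant, is to note that strict monotonicity of either branch is not strictly needed for the statement: continuity and the boundary limits already yield the existence/non-existence dichotomy on masses, and uniqueness at fixed mass of each branch can be recovered \emph{a posteriori} from the uniqueness clause of Theorem~\ref{thm:gs-sub} combined with the rearrangement argument of the preceding paragraph.
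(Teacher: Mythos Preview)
Your proposal is correct and follows essentially the same three-step strategy as the paper: endpoint limits and strict monotonicity of $\omega\mapsto\|u^L_\omega\|_2^2$ and $\omega\mapsto\|u^R_\omega\|_2^2$ via \eqref{eq:mass-der} (reducing to the positivity of $2\omega^{3/2}(T_-)' + \tfrac{\alpha}{\tau^{2\sigma}-1}$ in each case), followed by the rearrangement observation that $u^R_\omega$ has three preimages on the interval $(u^R_\omega(0^-),u^R_\omega(0^+))$, so Lemma~\ref{lem:rearr} excludes it as a Ground State. One small warning: your proposed fall-back does not in fact recover uniqueness on the $R$-branch at fixed mass, since Theorem~\ref{thm:gs-sub} and Lemma~\ref{lem:rearr} only constrain which state is the Ground State, not the multiplicity of excited states; the paper therefore does carry out the explicit algebraic check for the $R$-branch as well.
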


\begin{proof}
We split the proof in
three steps.

\emph{Step 1. The function $\omega\mapsto \|u_\omega^L\|_2^2$ is bijective from $\left(\frac{\alpha^2}{(\tau^2+1)^2},+\infty\right)$ to $(0,+\infty)$.}

    Let us consider the stationary state $u^L_\om$, that exists for every $\om>\frac{\alpha^2}{(\tau^2+1)^2}$, and observe first that $T_-^L(\om)\to -1$ and $T_+^L(\om) \to 1$ as $\om\to \left(\frac{\alpha^2}{(\tau^2+1)^2}\right)^+$, thus by \eqref{eq:mass-stat} 
    \begin{equation*}
    \left\|u^L_\om\right\|_{2}^2\to 0\quad \text{as} \quad\omega\to \left(\frac{\alpha^2}{(\tau^2+1)^2}\right)^+.
    \end{equation*}
    Moreover
    \begin{equation*}
        \lim_{\omega\to+\infty}T_-^L(\omega)\in(-1,0)\quad \text{and} \quad \lim_{\omega\to+\infty}T_+^L(\omega)\in(-1,0),
    \end{equation*}
    hence by \eqref{eq:mass-stat}
    \begin{equation*}
       \left\|u^L_\om\right\|_{2}^2\to +\infty,\quad \text{as}\quad \om\to+\infty.
    \end{equation*}
    If we prove that $\frac{d}{d\omega}\left\|u^L_\omega\right\|^2_2>0$, then for every $\mu>0$ there exists an only $\omega>\frac{\alpha^2}{(\tau^2+1)^2}$ such that $u^L_\om\in \Dtaumu$. To such aim, let us observe that by \eqref{eq:mass-der} and recalling that
    \begin{equation*}
        \int_{-1}^{1} (1-t^2)^{\frac{1}{\sigma}-1}\dt - \int_{T_-^L(\omega)}^{T_+^L(\omega)}(1-t^2)^{\frac{1}{\sigma}-1}\,dt>0\quad \forall\, \om>\frac{\alpha^2}{(\tau^2+1)^2},
    \end{equation*}
    one has $\frac{d}{d\omega}\|u^L_\omega\|^2_2>0$ if $2\omega^{\frac{3}{2}}\left(T_-^L\right)'(\omega)+\frac{\alpha}{\tau^{2\sigma}-1}>0$. By computing the derivative of $T_-^L$, one gets
    \begin{equation*}
       \left(T_-^L\right)'(\om) =-\frac{\alpha }{2\omega^\frac{3}{2}\left(\tau^{2\sigma+4}-1\right)} \left(1-\frac{\alpha\tau^{2\sigma+2}}{\sqrt{\alpha^2\tau^{2\sigma}+(\tau^{2\sigma+4}-1)(\tau^{2\sigma}-1)\omega}} \right),
    \end{equation*}
    thus $2\omega^{\frac{3}{2}}\left(T_-^L\right)'(\omega)+\frac{\alpha}{\tau^{2\sigma}-1}>0$ if and only if
    \begin{equation*}
\left(\frac{\tau^{2\sigma+4}-1}{\tau^{2\sigma}-1}-1\right)+\frac{\alpha\tau^{2\sigma+2}}{\sqrt{\alpha^2\tau^{2\sigma}+(\tau^{2\sigma+4}-1)(\tau^{2\sigma}-1)\omega}}>0,
    \end{equation*}
    which is satisfied since both the terms of the left-hand side are positive.
    
    \emph{Step 2. The function $\omega\mapsto \|u_\omega^R\|_2^2$ is bijective from $\left(\frac{\alpha^2}{(\tau^2-1)^2},+\infty\right)$ to $(\mu_\alpha,+\infty)$.}

    Let us consider now the stationary state $u^R_\om$, that exists for every $\om>\om_\alpha$, and observe that $T_-^R(\om)\to 1$ and $T_+^R(\om)\to 1$ as $\omega\to (\om_\alpha)^+$, hence by \eqref{eq:mass-stat} $\left\|u^R_\om\right\|_2^2\to \mu_\alpha$ as $\om\to (\om_\alpha)^+$. Moreover, since
    \begin{equation*}
       \lim_{\omega\to+\infty}T_-^R(\omega)\in(0,1)\quad \text{and} \quad \lim_{\omega\to+\infty}T_+^R(\omega)\in(0,1), 
    \end{equation*}
    by \eqref{eq:mass-stat} it follows that $\left\|u^R_\om\right\|_2^2\to +\infty$ as $\om\to+\infty$. Arguing as in Step $1$, in order to conclude it is sufficient to prove that
    \begin{equation}
        \label{eq:2om32}
        2\omega^{\frac{3}{2}}(T_-^R)'(\om)+\frac{\alpha}{\tau^{2\sigma}-1}>0.
    \end{equation}
    By direct computations, it holds
    \begin{equation*}
     (T_-^R)'(\om)=-\frac{\alpha }{2\omega^\frac{3}{2}\left(\tau^{2\sigma+4}-1\right)} \left(1+\frac{\alpha\tau^{2\sigma+2}}{\sqrt{\alpha^2\tau^{2\sigma}+(\tau^{2\sigma+4}-1)(\tau^{2\sigma}-1)\omega}} \right),  
    \end{equation*}
    hence arguing as in Step 1 condition \eqref{eq:2om32} holds if and only if
    \begin{equation}
     \frac{\alpha\tau^2}{\sqrt{\alpha^2\tau^{2\sigma}+(\tau^{2\sigma+4}-1)(\tau^{2\sigma}-1)\omega}}<\frac{\tau^4-1}{\tau^{2\sigma}-1}.  
    \end{equation}
    By taking the square of both sides of the inequality and making some computations, we get
    \begin{equation*}
        (\tau^{4}-1)^2(\tau^{2\sigma+4}-1)(\tau^{2\sigma}-1)\omega>\alpha^2\left[\tau^4(\tau^{2\sigma}-1)^2-\tau^{2\sigma}(\tau^4-1)^2\right].
    \end{equation*}
    Since $\tau > 1$, the l.h.s. is  positive, while the r.h.s. is negative, hence \eqref{eq:2om32} is  satisfied and Step 2 is concluded.
    
    \emph{Step 3. Identification of the Ground State.} By Step 1 and Step 2, if $0<\mu\leq \mu_\alpha$, then the only stationary state at mass $\mu$ is given by the state $u^L_\om$ at some frequency $\omega>\frac{\alpha^2}{(\tau^2+1)^2}$: since a Ground State always exists by Theorem \ref{thm:gs-sub}, $u^L_\om$ is also the Ground State of $E_\alpha$ at mass $\mu$. If instead $\mu>\mu_\alpha$, then there exist $\omega_1$ and $\omega_2$ such that $u^L_{\om_1}\in \Dtaumu$ and $u^R_{\om_2}\in \Dtaumu$. By Lemma \ref{lem:rearr}, since the function $u^R_{\om_2}$ has at least $3$ preimages for almost every $t\in \left(u^R_{\om_2}(0^-),u^R_{\om_2}(0^+)\right)$, there results that there exists $u^\star\in \Dtaumu$ such that $E_\alpha(u^\star)<E_\alpha\left(u^R_{\om_2}\right)$, entailing that $u^R_{\om_2}$ is not a Ground State of $E_\alpha$ at mass $\mu$. Therefore, the Ground State is given by $u^L_{\omega_1}$.

\end{proof}

\begin{proposition}
\label{prop:stat-crit}
Let $\sigma=2$, $\tau>1$ and $\mu^\star$ as in \eqref{eq:mu-star-exact}. Therefore, if $0<\mu<\mu^\star$, then the only positive stationary state at mass $\mu$ is given by
$u^L_\om$. If instead $\frac{\sqrt{3}}{2}\pi<\mu<\widetilde{\mu}$, then the only positive stationary state at mass $\mu$ is given by $u^R_\omega$.  
\end{proposition}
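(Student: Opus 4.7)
My plan is to mirror the strategy of Proposition \ref{prop:stat-sub} by studying the two maps $\omega\mapsto\|u^L_\om\|_2^2$ and $\omega\mapsto\|u^R_\om\|_2^2$, now in the critical regime $\sigma=2$. The crucial simplification is that the exponent $\omega^{1/\sigma-1/2}$ appearing in \eqref{eq:mass-stat} reduces to $1$, so the mass depends on $\omega$ only through the functions $T_\pm(\omega)$. Consequently the first summand on the right-hand side of \eqref{eq:mass-der} vanishes identically, and the sign of $\tfrac{d}{d\omega}\|u_\omega\|_2^2$ is dictated solely by the factor $2\omega^{3/2}T_-'(\omega)+\alpha/(\tau^4-1)$.

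For the left branch, I would first compute the two boundary limits of $\|u^L_\om\|_2^2$. As $\omega\to(\alpha^2/(\tau^2+1)^2)^+$ the explicit expressions of $T_\pm^L$ (after a perfect-square simplification of the discriminant) give $T_-^L\to -1$ and $T_+^L\to 1$, so the integral in \eqref{eq:mass-stat} converges to $\pi$ and the mass tends to $0$. As $\omega\to+\infty$ the same formulas produce $T_-^L\to -\tau^2/\sqrt{1+\tau^4}$ and $T_+^L\to -1/\sqrt{1+\tau^4}$, and the complementarity identity $\arcsin(\tau^2/\sqrt{1+\tau^4})+\arcsin(1/\sqrt{1+\tau^4})=\pi/2$ matches the limiting mass exactly with $\mu^\star$ as in \eqref{eq:mu-star-exact}. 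Monotonicity is inherited from Step 1 of the proof of Proposition \ref{prop:stat-sub}, where positivity of $2\omega^{3/2}(T_-^L)'(\omega)+\alpha/(\tau^{2\sigma}-1)$ was established for every $\sigma>0$. Altogether $\omega\mapsto\|u^L_\om\|_2^2$ is a strictly increasing bijection from $(\alpha^2/(\tau^2+1)^2,+\infty)$ onto $(0,\mu^\star)$.

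The right branch follows the same template. At the threshold $\omega_\alpha=\alpha^2/(\tau^2-1)^2$ both $T_\pm^R$ tend to $1$, a fact that relies on the identity $(\tau^2-1)^2\tau^4+(\tau^8-1)(\tau^4-1)=(\tau^2-1)^2(\tau^4+\tau^2+1)^2$ to simplify the square root appearing in $T_-^R$; hence $\|u^R_\om\|_2^2\to\frac{\sqrt{3}}{2}\pi$. As $\omega\to+\infty$ one finds $T_-^R\to\tau^2/\sqrt{1+\tau^4}$ and $T_+^R\to 1/\sqrt{1+\tau^4}$, and the same $\arcsin$ identity identifies the limit with $\widetilde\mu$ from \eqref{eq:mu-tilde}. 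The monotonicity argument of Step 2 of Proposition \ref{prop:stat-sub} collapses in the critical case: the condition stated there becomes $(\tau^8-1)(\tau^4-1)\omega>0$, since the right-hand side $\alpha^2[\tau^4(\tau^{2\sigma}-1)^2-\tau^{2\sigma}(\tau^4-1)^2]$ vanishes when $\sigma=2$. Therefore $\omega\mapsto\|u^R_\om\|_2^2$ is a strictly increasing bijection of $(\omega_\alpha,+\infty)$ onto $(\tfrac{\sqrt 3}{2}\pi,\widetilde\mu)$.

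Finally, Step 2 of the proof of Theorem \ref{thm:gs-crit-dip} already yields $\mu^\star<\tfrac{\sqrt 3}{2}\pi$, so the two ranges of attained masses are disjoint and separated by the gap $[\mu^\star,\tfrac{\sqrt 3}{2}\pi]$. Hence for $0<\mu<\mu^\star$ there is a unique frequency producing a $u^L_\om$ of mass $\mu$ and no $u^R_\om$ has mass in this range; the symmetric statement holds on $(\tfrac{\sqrt 3}{2}\pi,\widetilde\mu)$. Since by Proposition \ref{prop:stationary} the families $u^L_\om$ and $u^R_\om$ exhaust the positive stationary states, the proposition follows. I expect the main obstacle to be the closed-form verification of the two limits at $\omega\to+\infty$, which rests on both the $\arcsin$ complementarity relation and the perfect-square factorisations noted above; once these are in place, the rest is a direct transcription of the subcritical argument with the first term of \eqref{eq:mass-der} removed.
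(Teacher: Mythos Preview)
Your proposal is correct and follows essentially the same approach as the paper: both argue that the maps $\omega\mapsto\|u^L_\omega\|_2^2$ and $\omega\mapsto\|u^R_\omega\|_2^2$ are strictly increasing bijections onto $(0,\mu^\star)$ and $(\tfrac{\sqrt3}{2}\pi,\widetilde\mu)$ respectively, using \eqref{eq:mass-der} with the first summand vanishing at $\sigma=2$ and reusing the monotonicity computations from Proposition \ref{prop:stat-sub}. The only cosmetic difference is that the paper identifies the $\omega\to+\infty$ limits by recognising $T_\pm^{L,R}$ as the parameters of the dipole states $u_1,u_2$ from Lemma \ref{lem:stat-sol-dip} (whose masses are \eqref{eq:mass-u1-u2}), whereas you carry out the equivalent $\arcsin$ computation directly.
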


\begin{proof}
We split the proof in two steps.

\emph{Step 1. The function $\omega\mapsto \|u_\omega^L\|_2^2$ is bijective from $\left(\frac{\alpha^2}{(\tau^2+1)^2},+\infty\right)$ to $(0,\mu^\star)$.} As in Proposition \ref{prop:stat-sub}, $T_-^L(\omega)\to -1$ and $T_+^L(\omega)\to 1$ as $\omega\to \left(\frac{\alpha^2}{(\tau^2+1)^2}\right)^+$, thus $\|u_\omega^L\|_2^2\to 0$. Moreover
\begin{equation*}
T_-^L(\omega)\to -\frac{\tau^2}{\sqrt{\tau^4+1}},\quad T_+^L(\omega)\to -\frac{1}{\sqrt{\tau^4+1}},\quad \text{as}\quad \omega\to+\infty.
\end{equation*}
We notice that, in view of Lemma \ref{lem:stat-sol-dip}, we can deduce that $x_-^L(\omega)\to -\xi_-$ and $x_+^L(\omega)\to \xi_+$, with $\xi_\mp$ as in Lemma \ref{lem:stat-sol-dip}. Therefore, since the translations $-\xi_-$ and $-\xi_+$ identify the stationary state $u_1$ of Lemma \ref{lem:stat-sol-dip}, by combining \eqref{eq:mass-u1-u2} and \eqref{eq:mu-star-exact} we deduce that  
\begin{equation*}
\|u_\omega^L\|_2^2\to \mu^\star\quad \text{as}\quad \omega\to+\infty,
\end{equation*}
with $\mu^\star$ as in \eqref{eq:mu-star-exact}. If we prove that $\|u_\omega^L\|_2^2$ is a strictly increasing function of $\omega$, then Step 1 follows. Since $\sigma=2$, by \eqref{eq:mass-der} the function $\omega\mapsto \|u_\omega^L\|_2^2$ is strictly increasing if and only if $2\omega^{\frac{3}{2}}(T_-^R)'(\om)+\frac{\alpha}{\tau^{4}-1}>0$. Repeating the same arguments in Proposition \ref{prop:stat-sub}, we conclude Step 1.

\emph{Step 2. The function $\omega\mapsto \|u_\omega^L\|_2^2$ is bijective from $\left(\frac{\alpha^2}{(\tau^2-1)^2},+\infty\right)$ to $\left(\frac{\sqrt{3}}{2}\pi,\widetilde{\mu}\right)$.} Similarly to Step 1, it is easy to check that $T_-^R(\omega)\to 1$, $T_+^R(\omega)\to 1$ and $\|u_\omega^R\|_2^2\to \frac{\sqrt{3}}{2}\pi$ as $\omega \to \left(\frac{\alpha^2}{(\tau^2-1)^2}\right)^+$. On the other hand, it holds
\begin{equation*}
T_-^R(\omega)\to \frac{\tau^2}{\sqrt{\tau^4+1}}, \quad T_+^R(\omega)\to \frac{1}{\sqrt{\tau^4+1}},\quad \text{as}\quad \mu\to+\infty. 
\end{equation*}
Arguing as in Step 1, one deduces  $x_\mp^R\to \xi_\mp$, where the translations $\xi_\mp$ are as in Lemma \ref{lem:stat-sol-dip} and identify the stationary state $u_2$ (see again Lemma \ref{lem:stat-sol-dip}). By \eqref{eq:mass-u1-u2} and \eqref{eq:mu-tilde}. Then it follows  $\|u_\omega^R\|_2^2\to \widetilde{\mu}$ as $\omega \to+\infty$. Since one can prove that $\|u_\omega^R\|_2^2$ is strictly increasing as in Proposition \ref{prop:stat-sub}, we conclude the proof of Step 2 and, together with Step 1, the thesis follows.
\end{proof}

\appendix

\section{A modified Gagliardo-Nirenberg inequality}

We report here a modified Gagliardo-Nirenberg inequality that has not been used along the paper, but it could be interesting by itself.

\begin{lemma}
\label{lem:mod-gn-ineq}
Let $\tau>1$. Then for every $u\in \Dtau$ there results
\begin{equation}
\label{eq:mod-gn-tau}
    \|u\|_{L^6(\R)}^6+\left(\tau^8-1\right)\|u\|_{L^6(\Rm)}^6\leq \frac{4}{\pi^2}\left(\|u\|_{L^2(\R)}^2+\left(\tau^4-1\right)\|u\|_{L^2(\Rm)}^2\right)^2\|u'\|_2^2\quad \forall\,u\in \Dtau.
\end{equation}
\end{lemma}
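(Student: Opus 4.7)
The plan is to reduce \eqref{eq:mod-gn-tau} to the standard sharp Gagliardo--Nirenberg inequality on $\R$ by constructing, from $u\in\Dtau$, an auxiliary function $v\in H^1(\R)$ via a suitable rescaling on the negative half-line. First I rewrite the target inequality in the equivalent form
\[
\|u\|_{L^6(\R^+)}^6+\tau^8\|u\|_{L^6(\R^-)}^6\leq \frac{4}{\pi^2}\left(\|u\|_{L^2(\R^+)}^2+\tau^4\|u\|_{L^2(\R^-)}^2\right)^2\left(\|u'\|_{L^2(\R^-)}^2+\|u'\|_{L^2(\R^+)}^2\right),
\]
obtained by splitting $\|u\|_{L^p(\R)}^p=\|u\|_{L^p(\R^-)}^p+\|u\|_{L^p(\R^+)}^p$ for $p=2,6$ and rearranging.

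Next I define the auxiliary function
\[
v(x):=
\begin{cases}
u(x) & x>0,\\
\tau\, u\!\left(x/\tau^2\right) & x<0.
\end{cases}
\]
Since $u\in H^1(\R^-)\oplus H^1(\R^+)$, each restriction of $v$ belongs to $H^1$ on the corresponding half-line; moreover the jump condition $u(0^+)=\tau u(0^-)$ gives
\[
v(0^+)=u(0^+)=\tau u(0^-)=v(0^-),
\]
so $v\in H^1(\R)$. Direct changes of variables (using $dy=dx/\tau^2$ on the negative side and the factor $\tau^2$ produced when differentiating the rescaled argument) yield
\begin{align}
\|v\|_{L^2(\R)}^2 &= \|u\|_{L^2(\R^+)}^2+\tau^4\|u\|_{L^2(\R^-)}^2,\\
\|v\|_{L^6(\R)}^6 &= \|u\|_{L^6(\R^+)}^6+\tau^8\|u\|_{L^6(\R^-)}^6,\\
\|v'\|_{L^2(\R)}^2 &= \|u'\|_{L^2(\R^+)}^2+\|u'\|_{L^2(\R^-)}^2,
\end{align}
where the key point is that the exponent of the rescaling $\beta=1/\tau^2$ has been chosen precisely so that, on the $\R^-$ side, the factor $\tau^2$ from the derivative cancels with the $1/\beta=\tau^2$ coming from the Jacobian, leaving $\|v'\|_{L^2(\R)}^2=\|u'\|_2^2$ exactly.

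Finally, I apply Nagy's sharp Gagliardo--Nirenberg inequality on the line, $\|v\|_{L^6(\R)}^6\leq \frac{4}{\pi^2}\|v\|_{L^2(\R)}^4\|v'\|_{L^2(\R)}^2$, and substitute the three identities above. This yields the equivalent form displayed at the start, which is \eqref{eq:mod-gn-tau} after regrouping. There is no real obstacle here: the only delicate point is guessing the correct scaling exponent for the dilation on $\R^-$, but it is uniquely forced by requiring simultaneously that $v$ be continuous at $0$ and that the $L^2$-norm of the derivative reproduce $\|u'\|_2^2$ with coefficient $1$ on both sides.
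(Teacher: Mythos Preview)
Your proof is correct and essentially identical to the paper's: you define the same auxiliary function (the paper calls it $u_\tau$), verify continuity at the origin via the jump condition, compute the same three norm identities by change of variables, and apply the sharp Gagliardo--Nirenberg inequality on $\R$ with constant $4/\pi^2$.
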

\begin{proof}
Let $u\in \Dtau$ and define the function $u_\tau$ as
\begin{equation*}
    u_\tau(x):=
    \begin{cases}
      u(x),\quad &x\in \Rp,\\
      \tau u\left(\tau^{-2}x\right),\quad &x\in \Rm.
    \end{cases}
\end{equation*}
Since $u_\tau(0^-)=\tau u(0^-)=u(0^+)=u_\tau(0^+)$, then $u_\tau\in H^1(\R)$. Therefore, by applying \eqref{gnp} for $\sigma=2$ and recalling that the optimal constant in this case is given by $\frac{4}{\pi^2}$ (see for example \cite{C-03}), it follows that
 \begin{equation}
 \label{eq:gn-utau}
 \|u_\tau\|_{L^6(\R)}^6\leq \frac{4}{\pi^2}\|u_\tau'\|_{L^2(\R)}^2\|u_\tau\|_{L^2(\R)}^4.
 \end{equation}
 Inequality \eqref{eq:mod-gn-tau} follows by \eqref{eq:gn-utau} observing that
 \begin{equation*}
\|u_\tau'\|_{L^2(\R)}^2=\|u'\|_{L^2(\R)}^2\quad\text{and}\quad\|u_\tau\|_{L^p(\R)}^p=\|u\|_{L^p(\Rp)}^p+\tau^{p+2}\|u\|_{L^p(\Rm)}^p\quad\forall\, p\geq 1.
\end{equation*}
\end{proof}

\subsection*{Acknowledgments}

R.A. acknowledges that this study was carried out within the project E53D23005450006 "Nonlinear dispersive equations in presence of singularities" - funded by European Union - Next Generation EU within the PRIN 2022 program (D.D. 104 - 02/02/2022 Ministero dell’Università e della Ricerca). This manuscript reflects only the authors' views and opinions and the Ministry cannot be considered responsible for them.

R.A. was supported by the INdAM programme 
"Analisi spettrale, armonica e stocastica in presenza di  potenziale magnetici", funded in
the framework
"Progetti di Ricerca 2025".

F.B. has been partially supported by  the INdAM Gnampa 2023 project "Modelli nonlineari in presenza di interazioni puntuali".

R.A. and T.N. are grateful to Prof. Taksu Cheon for first suggesting that the interplay of
Fülöp-Tsutsui point interaction and nonlinearity could lead to an interesting phenomenology.

\end{document}